
\documentclass[journal]{IEEEtran}
\ifCLASSINFOpdf
\else
\fi
\hyphenation{}

\usepackage{amsmath}
\usepackage{amssymb}
\usepackage{amsthm}
\usepackage{placeins}
\usepackage[export]{adjustbox}

\usepackage{braket}
\usepackage{siunitx}
\usepackage{url}
\usepackage[colorlinks = true,
            linkcolor = black,
            urlcolor  = black,
            citecolor = black,
            anchorcolor = black]{hyperref}
\usepackage{enumitem}
\usepackage[table,xcdraw]{xcolor}
\usepackage{tikz}
\usetikzlibrary{shapes,arrows.meta, arrows,quotes,positioning,tikzmark,fit,backgrounds}
\tikzstyle{block} = [rectangle, draw, text centered, rounded corners]
\tikzstyle{box} = [rectangle, draw,text width=5em,text centered]
\tikzstyle{line} = [draw, -latex']

\usetikzlibrary{shapes.geometric, arrows}


\usepackage{tkz-graph}
\usepackage{breqn}
\usepackage{alphalph,etoolbox}
\patchcmd{\subequations}{\alph{equation}}{\alphalph{\value{equation}}}{}{}

\newcommand{\bsubeq}{\begin{subequations}}
\newcommand{\esubeq}{\end{subequations}}
\newcommand{\BI}{\begin{itemize}}
\newcommand{\EI}{\end{itemize}}
\newcommand{\I}{\item}

\newcommand{\cblue}{\color{black}}
\newcommand{\ccblue}{\color{black}}

\newcommand{\mc}{\mathcal}
\newcommand{\mb}{\mathbf}

\def\st{{\rm s.t.}}

\usepackage{algorithmic}
\usepackage{algorithm}
\usepackage{graphicx,amssymb}
\usepackage{enumerate}

\usepackage[normalem]{ulem}

\newcommand{\be}{\begin{enumerate}}
\newcommand{\ee}{\end{enumerate}}






\newcommand{\Gt}{\mc{G}^{\rm{Ther}}}
\newcommand{\Gw}{\mc{G}^{\rm{Wind}}}

\newcommand{\Tda}{\mc{T}^{\rm{DA}}}
\newcommand{\Trt}{\mc{T}^{\rm{RT}}}
\newcommand{\Trth}{\mc{T}^{\rm{RT|Hour}}}
\newcommand{\Nint}{N^{\rm{tp}}}
\renewcommand{\L}{\mc{L}}
\newcommand{\G}{\mc{G}}
\newcommand{\N}{\mc{N}}
\newcommand{\cstart}{C^{\rm{Start}}_g}
\newcommand{\cdown}{C^{\rm{Down}}_g}
\newcommand{\cvoll}{C^{\rm{VOLL}}}
\newcommand{\cvollrt}{C^{\rm{VOLL|RT}}}
\newcommand{\dbar}{\bar{D}_{it}}
\newcommand{\drt}{d^{\rm{RT}}_{it}}
\newcommand{\vdrt}{\bf{d}^{\rm{RT}}}
\newcommand{\drthat}{d^{\rm{RT}*}_{it}}
\newcommand{\punmet}{p^{\rm{Unmet}}_{it}}
\newcommand{\pcurtail}{p^{\rm{Curtail}}_{gt}}
\newcommand{\ptrans}{P^{\rm{Trans}}_{ij}}
\newcommand{\prt}{p^{\max,\rm{RT}}_{gt}}
\newcommand{\vprt}{\bf{p}^{\max,\rm{RT}}}

\newcommand{\Pmaster}{\mc{P}^{\rm{Master}}}

\newcommand{\pmaxbar}{\bar{P}_{gt}^{\max}}
\newcommand{\pmin}{P_{g}^{\min}}

\newcommand{\valpha}{\pmb \alpha}
\newcommand{\alphad}{\alpha^d_{kt}}
\newcommand{\alphaw}{\alpha^w_{kt}}
\newcommand{\alphadtil}{\tilde{\alpha}^d_{kt}}
\newcommand{\alphawtil}{\tilde{\alpha}^w_{kt}}

\newcommand{\vqd}{\mathbf{Q}^d_{kt}}
\newcommand{\vqw}{\mathbf{Q}^w_{kt}}

\newcommand{\vxrt}{\mathbf{x}^{\rm{RT}}}
\newcommand{\vomega}{\pmb \omega}
\newcommand{\vomegatil}{\tilde{\pmb \omega}}

\newcommand{\Xrt}{X^{\rm{RT}}}
\newcommand{\vy}{\mathbf{y}}
\newcommand{\vystar}{\mathbf{y}^*}
\newcommand{\ystar}{y^*_{gt}}

\newcommand{\uhat}{\hat{V}^*}
\newcommand{\lambdahat}{\lambda^*_{it}}
\newcommand{\Ione}{\mc{I}_{1t}}
\newcommand{\Izero}{\mc{I}_{0t}}
\newcommand{\Ionep}{\mc{I}'_{1t}}
\newcommand{\Izerop}{\mc{I}'_{0t}}

\renewcommand{\O}{\mc{O}}

\newcommand{\tstar}{t}
\newcommand{\dstarbar}{\bar{D}_{it}}
\newcommand{\lambdayo}{\lambda_{i\tstar}(\vy, \vomega)}

\newcommand{\lambdaystaromegastar}{\lambda_{i\tstar}(\vy^*, \vomega^*)}
\newcommand{\lambdart}{\lambda_{it}(\vy^*, \vomega)}
\newcommand{\dcopf}{\text{DCOPF}(\vystar, \vomega)}
\newcommand{\dcopftil}{\text{DCOPF}(\vystar, \vomegatil)}
\newcommand{\dcopfd}{\text{DCOPF-D}(\vystar, \vomega)}
\newcommand{\dcopfa}{\text{DCOPF-A}(\vystar)}
\newcommand{\dcopfADV}{\text{DCOPF-A}(\vystar)}

\newcommand{\Vhat}{\hat{V}}

\newcommand{\Mrt}{M^{\rm{RT}}_g}

\newcommand{\pcurtails}{p^{\rm{Curtail}}_{sgt}}
\newcommand{\var}{z}
\newcommand{\pmaxs}{P_{sgt}^{\max}}
\newcommand{\punmets}{p^{\rm{Unmet}}_{sit}}

\usepackage{longtable}
\usepackage{afterpage}
\newcommand{\sm}{\setminus}

\usepackage{mathtools}

\usepackage{multirow}
\usepackage{graphicx}
\usepackage{caption}
\usepackage{subfig}
\usepackage{breqn}
\allowdisplaybreaks

\usepackage[numbers,sort&compress]{natbib}
\usepackage{multicol}
\usepackage{hyperref}
\usepackage{algorithmic, algorithm}
\usepackage{amsmath, amsfonts, amssymb, amsthm}
\usepackage{enumitem}
\usepackage{bm}
\usepackage{cleveref}
\crefname{section}{§}{§§}
\Crefname{section}{§}{§§}
\newtheorem{theorem}{Theorem}[section]

\newtheorem{proposition}[theorem]{Proposition}

\usepackage{tikz}
\usetikzlibrary{positioning}

\usepackage{placeins}
\newcommand{\subparagraph}{}
\usepackage{titlesec}
\titlespacing*{\section}{0pt}{5pt}{3pt}
\titlespacing*{\subsection}{0pt}{3pt}{0pt}
\titlespacing*{\subsubsection}{0pt}{3pt}{0pt}
\setlength{\parskip}{0cm}

\setlength{\belowdisplayskip}{1pt}
\setlength{\belowdisplayshortskip}{1pt}
\setlength{\abovedisplayskip}{1pt}
\setlength{\abovedisplayshortskip}{1pt}
\setlength{\belowcaptionskip}{4pt}
\setlength{\abovecaptionskip}{1pt}
\setlength{\textfloatsep}{3pt}
\setlength{\floatsep}{3pt}
\setlength{\itemsep}{3pt}

\begin{document}
%
\title{Risk-Aware Security-Constrained Unit Commitment{\cblue : Taming the Curse of Real-Time Volatility and Consumer Exposure}}
%
%
%

\author{
Daniel Bienstock, 
Yury Dvorkin, 
Cheng Guo, 
Robert Mieth, 
Jiayi Wang
\thanks{This work was supported by the US DOE ARPA-e under Grant DE-AR0001300.}
\thanks{D. Bienstock: 
Department of Industrial Engineering and Operations Research, Columbia University, New York, NY 10027 USA, 
\href{mailto:dano@columbia.edu}{dano@columbia.edu}
}
\thanks{Y. Dvorkin: 
Department of Electrical and Computer Engineering, Department of Civil and System Engineering, Ralph O'Connor Sustainable Energy Institute,  Johns Hopkins University, Baltimore, MD 21218 USA, 
\href{mailto:ydvorki1@jhu.edu}{ydvorki1@jhu.edu}
}
\thanks{C. Guo: 
School of Mathematical and Statistical Sciences, Clemson University, Clemson, SC 29634 USA, 
\href{mailto:cguo2@clemson.edu}{cguo2@clemson.edu}
}
\thanks{R. Mieth: 
Department of Industrial and Systems Engineering, Rutgers University, New Brunswick, NJ 08901 USA, 
\href{mailto:robert.mieth@rutgers.edu}{robert.mieth@rutgers.edu}
}
\thanks{J. Wang: 
Department of Management Science and Engineering, Stanford University, Stanford, CA 94305 USA, 
\href{mailto:jyw@stanford.edu}{jyw@stanford.edu}
}
}

\maketitle

\begin{abstract}
We propose an enhancement to wholesale electricity markets to contain the exposure of consumers to \textit{increasingly large and volatile consumer payments} arising as a byproduct of volatile real-time net loads -- i.e., loads minus renewable outputs -- and prices, both compared to day-ahead cleared values. We incorporate a trade-off, motivated by portfolio optimization methods, between standard day-ahead payments and a robust estimate of such excess payments into the day-ahead computation and specifically seek to account for \textit{volatility} in real-time net loads and renewable generation. \color{black}{Our model features a data-driven uncertainty set based on principal component analysis, which accommodates both load and wind production volatility and captures locational correlation of uncertain data. To solve the model more efficiently, we develop a decomposition algorithm that can handle nonconvex subproblems. Our extensive experiments on {\cblue a realistic NYISO data set} show that the risk-aware model protects the {\cblue consumers} from potential high costs caused by adverse circumstances.}
\end{abstract} 

\begin{IEEEkeywords}
Security-constrained unit commitment, wind uncertainty, data-driven uncertainty set. 
\end{IEEEkeywords}

%
\IEEEpeerreviewmaketitle

\section{Introduction}\label{sec: intro}
{\cblue Wholesale electricity markets are typically organized in two stages. First, day-ahead (DA) markets employ Security Constrained Unit Commitment (SCUC) computations  \cite{al2014role} to secure generator commitments as well as generation and load amounts for the forthcoming day, which are paid for using locational marginal prices (LMPs).  Second, given the DA outcomes and updated forecast of loads and renewables and availability of generation and transmission resources, the real-time (RT)  markets identify additional generation amounts to match actual RT loads, which are additionally paid for using RT LMPs\footnote{\cblue The DA schedule is updated via the Reliability Unit Commitment (RUC) computation \citep{helman2008design}; however, this computation has no direct financial impact.}.  

Any RT load not cleared in the DA stage is paid for using RT LMPs; we term such obligations \textit{RT consumer exposure}. Figure \ref{ecp} details possible discrepancies between DA and RT loads, which are a source of financial risk especially when combined with high RT LMPs.  These discrepancies  are also exacerbated by \textit{volatility} (i.e., variation around the mean) in loads and environmental conditions, an increasingly important factor in high-renewable markets. 
\begin{figure}[b!]
  \centering
     \includegraphics[height=1.85in]{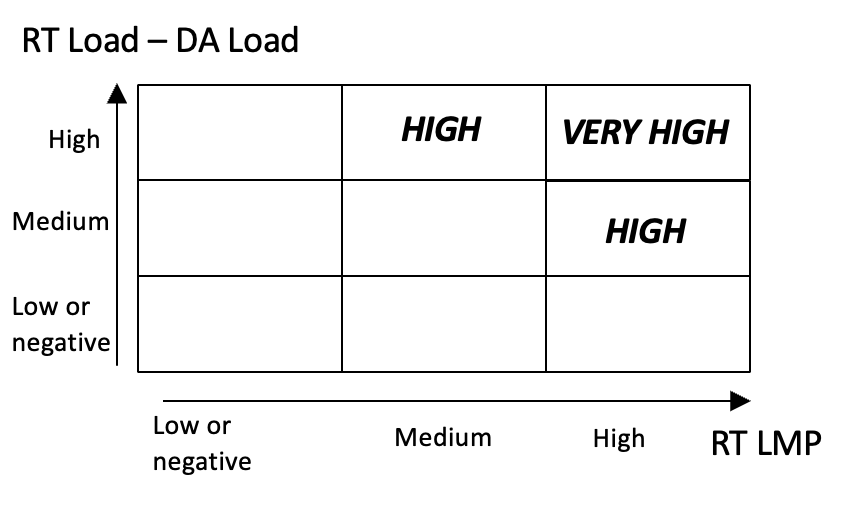} 
	\caption{RT consumer exposure risk profile} 
	\label{ecp}  
\end{figure}
We further note that the DA market carries a financial obligation for both loads and generators; namely, loads pay at the DA LMP -- in other words the current DA computation is not set up to handle the risk of high RT consumer exposure.

This paper addresses the tradeoff between the risk represented by high RT consumer exposure and the traditional cost-minimization (or welfare maximization) provided by DA SCUC. We remark that the DA SCUC (and RUC) computation is deterministic and currently uses single-point estimates of load averages for each specific hourly or half-hourly period. RT dispatch, on the other hand, relies on estimates of \textit{average} loads in the ensuing time window, typically spanning 5 minutes.} Significant load or generation deviations within the RT window are handled via reserves, which are set up as exogenous reserve requirements. This combination of prediction-driven scheduling and {\cblue RT} correction has proved successful {\cblue in low-renewable markets} and is a strong engineering accomplishment {\cblue \cite{national2021future}}.

{\cblue Ongoing} large-scale deployment of renewable generation, as well as battery resources and controllable loads, challenges this paradigm. In particular, renewables may, under adverse circumstances, introduce large and correlated {\cblue RT} deviations from expected generation levels, thus increasing \textit{volatility}. It is worth noting that even high-quality forecasts cannot overcome   volatility, which is a reflection of {\cblue RT} stochasticity;
in other words uncertainty at the 5-minute level may be present even  generally accurate estimates of \textit{average} renewable output or load are available at the {\cblue DA} phase.
{\cblue The RT settlement mechanism implies that the consumers (and public, by extension) are ultimately responsible for volatility-derived costs, in the form of high {\cblue RT consumer exposure} -- partially or fully offsetting the economic advantage of zero-marginal renewable generation.  High excess consumer 
payments, if frequently observed, could become of interest to regulatory and government entities, and may potentially reduce public appetite for adopting renewable generation \cite{murray2019paradox}.} 


{\cblue This paper presents a volatility-aware algorithmic enhancement to the DA SCUC and RUC computations, with the goal of yielding DA decisions that effectively trade-off DA costs against {\cblue RT consumer exposure} arising from volatility.  A second goal in our work is to develop a DA SCUC alternative that minimally changes the mechanics of current wholesale electricity markets and the overall operation of power systems, including reserves and balancing.  Moreover, we do not alter the {\cblue RT} markets.}

We are specifically interested in financial and physical risks raised by \textit{critical time periods} characterized by sharp load and/or renewable volatility. A pertinent example is provided by peak hours on high-load, high-variability or otherwise stressed days, which, as a result may experience very high discrepancy between {\cblue RT} and {\cblue DA} prices. See, for example,  Figure 17 of \cite{zarnikau2015day} and Figure 3-7 of \cite{ISONE2022AnnualMarkets}. Empirical evidence in \citep{CAISO_OASIS, jha2013testing} also suggests that, additionally, virtual traders can forecast, and thus take advantage of, these periods quite accurately. This point serves to underscore the magnitude of the financial discrepancies between {\cblue DA} and {\cblue RT} markets.

{\cblue We stress that current DA SCUC routines used in practice are not immune to the {\cblue RT consumer exposure} under RT LMP spikes that as we discussed are due to excess RT load and shortfall in wind production. This is a major concern as the still largely inflexible consumers end up paying more, especially under increasing penetration of renewable energy and under extreme events \cite{levin2022extreme, busby2021cascading}.} Our approach explicitly internalizes RT price formation under adversarial conditions in the DA UC process and, using a realistic New York Independent System Operator (NYISO) test system {\cblue and demonstrates its benefits to reducing the overall RT consumer exposure}. 

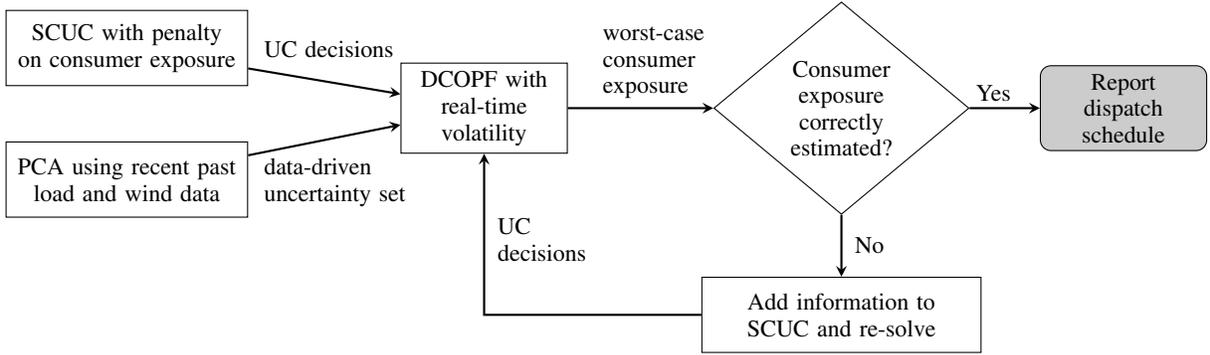
\begin{figure*}[ht]
\begin{center}
\vspace*{-0.4cm}
{
\begin{tikzpicture}[node distance=1.75cm]
\tikzstyle{startstop} = [rectangle, rounded corners, minimum width=1.15cm, minimum height=0.7cm,text centered, draw=black, fill = gray!40!white]
\tikzstyle{process} = [rectangle, minimum width=2cm, minimum height=1cm, text centered, text width=1.98cm, draw=black]
\tikzstyle{decision} = [diamond, aspect=1.4, text centered, text width=1.2cm, draw=black]
\tikzstyle{arrow} = [thick,->,>=stealth]

\small
\node (scuc) [process,xshift = 0.7cm,text width=3cm] {SCUC with penalty on consumer exposure};
\node (pca) [process,below of = scuc,text width=3cm] {PCA using recent past load and wind data};
\node (dcopf) [process, right of=scuc,xshift = 3cm, yshift = -0.8cm, text width=2cm] {DCOPF with real-time volatility};


\node (expo) [decision,right of=dcopf,aspect=1.2, xshift=3cm, text width=1.4cm] {Consumer exposure correctly estimated?};
\node (stop) [startstop, right of=expo,xshift = 2cm,text width=2cm] {Report dispatch schedule};
\node (resolve) [process, aspect=1.33,below of = expo, text width=3.5cm, yshift=-1cm]  {Add information to SCUC and re-solve};
\node (dummy3) [left of=resolve] at (dcopf |- resolve) {};


\draw [arrow] (scuc) -- node[anchor=south,xshift = 0.2cm,yshift=0.2cm, text width = 2cm]{UC decisions} (dcopf);
\draw [arrow] (pca) -- node[anchor=south,xshift = 0.2cm,yshift=-1cm, text width = 2cm]{data-driven uncertainty set} (dcopf);
\draw [arrow] (expo) -- node[anchor=east,xshift = 0.2cm,yshift=0.2cm] {Yes} (stop);
\draw [arrow] (expo) -- node[anchor=east, xshift = 3cm,yshift=0.01cm, text width = 2.7cm] {No} (resolve);

\draw [arrow] (resolve) -| node[anchor=east,xshift = 1.8cm,yshift=1cm, text width=1.5cm] {UC decisions} (dcopf);
\draw [arrow] (dcopf) -- node[anchor=south,xshift = 0.5cm,yshift=0.01cm, text width = 2cm]{worst-case consumer exposure} (expo);
\end{tikzpicture}
}
\caption{\cblue Our method obtains a risk-aware day-ahead dispatch schedule, by iteratively updating the SCUC model with information on worst-case real-time consumer exposure under different UC decisions.}
\label{fig: model_flowchart}
\end{center}
\end{figure*}


\subsection{Literature Review}\label{ch: lit_review}
The {\cblue wide-spread} deployment of {\cblue uncertain renewable generation} has previously motivated numerous proposals to improve upon currently static reserve requirements and deterministic point forecasts through alternative probabilistic models \cite{zheng2014stochastic,roald2023power}.
{\cblue From a research perspective, the SCUC computation is commonly recast as} a two-stage stochastic program (SP) 
{\cblue so as} to tackle demand uncertainty \cite{takriti1996stochastic,carpentier1996stochastic} and {\cblue variable} renewable generation \cite{morales2009economic,wu2007stochastic,dvorkin2014hybrid,wang2015real,sundar2016unit}. 
These approaches optimize 
{\cblue the sum of} UC cost (the first stage) {\cblue plus expected} dispatch (second stage), and model uncertainty through scenarios or an estimated probability distribution.
A popular alternative to SP-based approaches, avoiding their often prohibitive computational complexity and scenario requirements \cite{wu2007stochastic}, is adaptive robust optimization (ARO)  \cite{jiang2011robust,bertsimas2012adaptive}. This approach is a variant of robust optimization \citep{bertsimas2004price}. ARO models minimize {\cblue DA} UC and {\cblue RT} dispatch cost for a worst-case, {\cblue RT} realization drawn from a pre-defined uncertainty set.
For a given uncertainty set, this approach is computationally efficient and does not require {\cblue an} estimation or assumption of a specific underlying probability distribution.
The uncertainty set itself can be estimated from data, but requires careful tuning to achieve good quality solutions that are not overly conservative \cite{golestaneh2018polyhedral}.
Polyhedral sets as in \citep{jiang2011robust, bertsimas2012adaptive} are easy to implement but do not capture correlation information. {\cblue Data-driven methods have recently been used to create uncertainty sets. For instance, a Dirichlet process mixture model is used in \cite{ning2019data} to construct data-driven uncertainty sets for wind forecast errors, while convex combination of historical renewable generation profiles are used for the data-driven uncertainty sets in \cite{velloso2019two}. These uncertainty sets are shown to capture more complex dynamics of the uncertain data and avoid overestimation of variations.}

{\cblue Data-driven ARO (DDARO) models for UC incorporate data-driven uncertainty sets in ARO models. This approach has several benefits such as better interpretability \cite{velloso2019two} and cost reduction \cite{ning2019data}. There are different types of objective functions used in DDARO UC models. For example, \cite{velloso2019two} minimizes the DA operation cost and worst-case total RT redispatch cost, while \cite{ning2019data,zhao2022sustainable} minimize total commitment and worst-case dispatch costs. To the best of our knowledge, no DDARO model (or ARO model in general) in the literature addresses the RT consumer exposure in the RT market. In addition, some models in the literature depicts a different market clearing process than the current practice \cite{ning2019data,zhao2022sustainable}, suggesting a need for a more significant reform of the existing two-stage market. Our proposed model aims to protect the consumers from RT price spikes, while keeping our setup very close to the actual DA market practice so as to ease implementation.

}

{\cblue We would like to note the difference between our work and the literature on consumer payment minimization. The SCUC computation in \cite{fernandez2013network} is modified to minimize a cost objective equal to the total consumer payment (rather than commitment plus generation costs) using deterministic {\cblue DA} estimates for loads.  A restructuring of the {\cblue RT} pricing mechanism that differs from the LMP setup is considered in \cite{wang2023optimization} with the goal of obtaining competitive equilibria and thus lowering consumer prices.  In contrast, the {\cblue RT consumer exposure} in our model is based on \textit{high} RT load  and RT price spikes, both as a result of {\cblue RT} volatility. Our goal is to produce a DA unit commitment that trades-off DA cost versus RT consumer exposure due to volatility -- while producing a market structure closely aligned with the current practice.} 

\subsection{Our contributions}\label{ch: our_conts}
 {\cblue This paper proposes} a two-stage risk-aware optimization model for the {\cblue DA} SCUC problem that explicitly models both {\cblue DA and RT} operations with a focus on {\cblue RT consumer exposure}.
We solve this model using a customized cutting plane-based algorithm. 
Inspired by factor stressing used in the financial services industry \citep{haughfactor}, we construct a data-driven uncertainty set based on principal component analysis (PCA) \citep{wold1987principal} to capture locational {\cblue correlations, which model} the stochasticity in both load and wind generation more realistically.
While the constructed uncertainty set is similar to \cite{ning2018data} (as used for UC in \cite{zhao2022sustainable}), it is different in structure and adapted for our proposed DA SCUC model. 
To demonstrate our model on a real-world, large-scale NYISO case study, we also develop heuristics to reformulate the uncertainty set. {\cblue Figure \ref{fig: model_flowchart} provides an overview of our modifications to a DA SCUC model.}


We summarize our contributions as follows:
\BI
\I Inspired by the risk perspective provided by the mean-variance portfolio optimization problem \citep{markowitz1952portfolio}, we add a volatility-dependent penalty term to the {\cblue DA }SCUC objective that reflects {\cblue the RT uncertainty and RT consumer exposure} during a period of the day that is expected to be particularly {\cblue volatile} (e.g., peak hours on a high-load day).
Thus, {\cblue the DA SCUC is completed with} an observed real-world phenomenon, namely {\cblue RT} price spikes.
\I We account for volatility by constructing data-driven adversarial scenarios based on stressing covariance factors. More specifically, we build a data-driven uncertainty set via PCA (principal-component analysis), relying on NYISO historical data. The uncertainty set captures locational {\cblue correlations} of uncertainty data, which {\cblue includes} both load and wind generation. {\cblue The leading modes of the covariance matrix are used to adversarially magnify volatility.  See Section \ref{sec: uncertainty_set}.}
\I We develop an algorithm to solve the proposed nonconvex, risk-aware {\cblue DA SCUC} model. The algorithm is similar to Benders' decomposition scheme \citep{bnnobrs1962partitioning};  it iteratively refines a relaxed {\cblue DA} problem by adding cuts generated from an adversarial {\cblue RT} problem. Due to the nonconvex subproblem, our algorithm cannot directly adopt Benders cuts as has been done in \cite{jiang2011robust} and \cite{bertsimas2012adaptive}. Instead, we use no-good cuts \cite{wolsey2020integer}, integer L-shaped cuts \citep{laporte1993integer}, and novel problem-specific logic-based Benders decomposition (LBBD) cuts \citep{hooker2003logic}. We also speed up the algorithm by developing a grid search heuristic for the nonconvex {\cblue RT} problem, and adding cuts via a branch-and-cut scheme. With the proposed solution approach, we are able to solve {\cblue a} large-scale NYISO case study efficiently.
\EI


\section{\cblue Risk-aware SCUC}\label{sec: setup}

Our modified SCUC computation builds on current industry practice \cite{nyiso_day_ahead} by extending common model formulations \cite{al2014role,mieth2022risk} to incorporate expected volatility in {\cblue RT} operations.
Under such conditions, a financial penalty may ensue and, possibly, actual physical risk may occur.  Empirical evidence from virtual trader activity suggests that on selected periods of the year (e.g., high temperature days) high-volatility time spans are predicted in advance \citep{eia_gov}. {\cblue We remind the reader that excess RT load (i.e., RT load in excess over its DA counterpart) is paid for at the RT LMP.  Formally, at each bus\footnotemark:
\begin{align} \label{eq:excess}
& \text{RT consumer exposure at bus $i$} \ = \ \nonumber \\ & \text{(RT LMP at $i$)} \times \max\{\text{\,(RT load at $i$ - DA load at $i$)},\, 0\,\},
\end{align}
where we are considering a particular time period of the day. 

\footnotetext{We consider payments settled on a nodal/bus basis. This could be extended to zonal/area payments, which are practiced in some power markets.}

At the same time, DA markets imply a different financial obligation, namely
\begin{align} \label{eq:dapayments}
& \text{DA payment at bus $i$} \, = \,  \nonumber \\ & \quad \text{(DA LMP at $i$)} \times \text{\,(DA load at $i$)}.
\end{align}
When RT LMPs are (much) higher than DA LMPs, and likewise with loads, the RT consumer exposure is very high -- an undesirable outcome.}

To handle both financial and physical risk, while maintaining its overall structure, we modify the \textit{objective} of the SCUC mixed-integer program (MIP) by adding a term that approximates {\cblue RT consumer exposure due to volatility -- in renewable output and loads -- during the time period of interest, plus appropriate constraints.}  

The algorithmic implementation of this updated SCUC can be viewed as a two-stage model. {\cblue In the first stage a unit commitment, DA dispatch and prices are obtained. In the second stage, the first-stage solution to an appropriately 
instrumented adversary. The goal of the adversary is to compute a realistic scenario for the critical time period that \textit{stresses} the given SCUC solution.} This stress computation relies on data-driven models of volatility in loads and wind power.  The information gleaned from this process is incorporated, in appropriate form, into {\cblue our revised SCUC computation.}

This cycle is repeated until a desirable convergence is attained.  Figure \ref{fig: two-stage_model} illustrates the relationship between the two stages of the model. The model in Figure \ref{fig: two-stage_model} has two critical features. First, it does not depart from the standard SCUC paradigm in that it produces precisely the same output as SCUC does, namely, a UC decision and DA dispatch. The UC decision can thus be used to obtain {\cblue DA} {\cblue locational} marginal prices \citep{ISO-NE_LMP_FAQ} via dual variables, as done in practice via the so-called ``pricing run". Second, our algorithm does not simply amount to a robust optimization approach in the sense that the objective function is a blend of {\cblue DA} costs and {\cblue RT} robustified costs, with the importance assigned to the second term controlled by a multiplier that reflects risk tolerance. 

 
\begin{figure}[hbpt]
\centering
\begin{adjustbox}{minipage=\linewidth,scale=0.9}
\begin{tikzpicture}[node distance = 5cm,auto]
\node [block, text width=2.5cm] (master) {Day-Ahead SCUC Problem};
\node [block, right of=master, text width=3.5cm] (sub) {Real-Time Adversarial DCOPF Problem};
\draw[{Latex[length=1.5mm]}-] (sub) to [bend right=30] node [above, sloped] (text1) {Unit commitment decisions} (master);
\draw[{Latex[length=1.5mm]}-] (master) to [bend right=30] node [below, sloped] (text1) {Adversarial scenario} (sub);       
\end{tikzpicture}
\end{adjustbox}
\caption{Two-stage risk-aware optimization model}\label{fig: two-stage_model}
\end{figure}
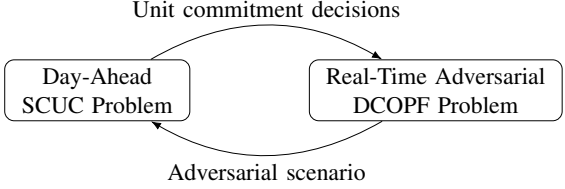
\subsection{Model for Risk-Aware SCUC Problem}
The proposed SCUC assumes the viewpoint of the system operator. In our model for the {\cblue DA} market, we minimize the total cost of electricity production and generator start-up and shutdown, plus an estimate of the {\cblue RT consumer exposure} attained in the {\cblue RT} market, which are due to incorrect wind and/or net-load forecasts during a time period of interest. We further stress that we specifically focus on errors in forecast that are due to volatility
{\cblue
(We note that forecast errors from behind-the-meter solar generation are contained in the net-load forecast errors. Through a straightforward extension, our model can also explicitly consider large-scale solar generation but we omit this option for ease of exposition).}
The {\cblue RT consumer exposure} is the payment incurred by the consumers in the {\cblue RT} market when load is underestimated in the {\cblue DA} market. Note that the {\cblue RT consumer exposure} is, at the point of time when SCUC is run, an uncertain quantity. 

In the formulation below, an \textit{appropriately robust} estimate of the {\cblue RT consumer exposure} is indicated by the quantity $\hat V$, which appears in the objective function \eqref{eq: uc_0} and in constraint \eqref{eq: fullproblem_2}. Note that in the objective function $\hat V$ is scaled by a certain weight $\rho > 0$ and the model becomes more risk-aware as $\rho$ increases. 
Hence, we model the total cost as follows:
\begin{align}\label{eq: uc_0}
&\sum_{t\in\Tda}\Big(\sum_{g\in\G} \left(h_{gt} + \cstart v_{gt} + \cdown w_{gt}\right)\nonumber\\&+ \sum_{i\in\N} \cvoll \punmet\Big)+\rho \hat V,
\end{align}
where the first four terms denote the production cost $h_{gt}$, startup cost $\cstart$, shutdown cost $\cdown$, and {\cblue DA} value of lost load (VOLL) $\cvoll$. In addition, $\Tda$ is the set of time periods (hours) in the {\cblue DA} market, $\G$ is the set of generators, $\N$ is the set of buses. Binary variables $v_{gt}$ and $w_{gt}$ denote startup and shutdown decisions, and $\punmet$ is the unsatisfied load. Note that $h_{gt}$ is a piecewise linear function of power output $p_{gt}$, and can be modelled with constraints in \eqref{eq: uc_1}. Each linear piece ($o\in\mc{O}$) in the piecewise linear function is represented by one constraint ($\forall t\in\Tda$):
\begin{align}\label{eq: uc_1}
h_{gt} \geq C^1_{og} p_{gt} + C^0_{og}y_{gt}&&\forall o\in\mc{O}, g\in\Gt,
\end{align}
where binary variable $y_{gt}$ denotes the commitment decision, $C^1_{og}$ and $C^0_{og}$ are respectively the slope and intercept for the cost segment $o$ in the piecewise linear function, and $\Gt$ is the set of thermal generators.

In addition, we have the following constraints in the {\cblue DA} problem:

(1) {\it Load Constraints ($\forall t\in\Tda)$: }For each hour the {\cblue DA} expected load $\dbar$ at each node is either satisfied by production at the node and power transmitted to the node, or by shedding load penalized by the VOLL. The set $\G_i$ includes both thermal and renewable (wind) generators at bus $i$, $\L$ is the set of transmission lines, and $f_{ijt}$ is the power flow.
\begin{subequations}\label{eq: uc_2}
\begin{alignat}{4}
& \sum_{g\in\G_i} p_{gt} + \punmet+ \sum_{(j,i)\in\L}f_{jit}- \sum_{(i,j)\in\L} f_{ijt} = \dbar\hspace{-1cm}\nonumber\\&&&\forall i\in\N\label{eq: uc_2_1}\\
& \punmet \geq 0&&\forall i\in\N. \label{eq: uc_2_2}
\end{alignat}
\end{subequations}

(2) {\it Linearized (DC) Power Flow Constraints ($\forall t\in \Tda$): }Constraint \eqref{eq: uc_3_1} defines the linearized (DC) power flow in terms of the voltage angle difference $\theta_{it}-\theta_{ji}$ at time $t$ between buses $i$ and $j$ and line susceptance $B_{ij}$. Constraints \eqref{eq: uc_3_2} and \eqref{eq: uc_3_3} bound flows.
\begin{subequations}\label{eq: uc_3}
\begin{alignat}{4}
& f_{ijt} = B_{ij} \left(\theta_{it} - \theta_{jt}\right) ~~&&\forall (i, j)\in\L \label{eq: uc_3_1} \\
& f_{ijt} \leq \ptrans&&\forall (i, j)\in\L \label{eq: uc_3_2}\\
& f_{ijt}\geq - \ptrans&&\forall (i, j)\in\L. \label{eq: uc_3_3}
\end{alignat}
\end{subequations}

(3) {\it Startup/Shutdown Decisions}: The following constraints link on/off statuses with startup and shutdown decisions.
\begin{subequations}\label{eq: uc_4}
\begin{alignat}{4}
& v_{gt} - w_{gt} = y_{gt} - y_{g, t-1} \quad && \forall g\in\Gt, t\in\Tda\sm\{1\}\\
& v_{gt}, w_{gt}, y_{gt} \in \{0,1\}\quad && \forall g\in\Gt, t\in\Tda.
\end{alignat}
\end{subequations}

(4) {\it Production Constraints ($\forall t\in\Tda$): }Constraints \eqref{eq: uc_5_1} and \eqref{eq: uc_5_2} set bounds for power outputs of thermal generator $g$. Constraints \eqref{eq: uc_5_3} set the outputs of wind generator $g$ at no more than $\pmaxbar$, where $\pcurtail$ is the curtailed output, and $\Gw$ is the set of wind generators.
\begin{subequations}\label{eq: uc_5}
\begin{alignat}{4}
&p_{gt} \leq P^\max_g y_{gt} \qquad &&\forall g\in\Gt \label{eq: uc_5_1}\\
& p_{gt} \geq P^\min_g y_{gt} \qquad && \forall g\in\Gt\label{eq: uc_5_2}\\
& p_{gt} + \pcurtail = \pmaxbar \qquad && \forall g\in\Gw\label{eq: uc_5_3}\\
& p_{gt} \geq 0 \qquad && \forall g\in\G\label{eq: uc_5_4}\\
& \pcurtail \geq 0 && \forall g\in\Gw.\label{eq: uc_5_5}
\end{alignat}
\end{subequations}

(5) {\it Ramping Constraints ($\forall t\in\Tda\sm\{1\}$): }The following constraints enforce generator multi-period ramping limit $M_g$.
\begin{subequations}\label{eq: uc_ramp}
    \begin{alignat}{4} \label{eq: fullproblem_1_3} 
    & p_{gt} - p_{g, t-1} \leq M_g y_{g, t-1} + P^\min_g v_{gt}\qquad &&\forall g\in\Gt\\
    & p_{g,t-1} - p_{gt} \leq M_g y_{gt} + P^\min_g w_{gt}\qquad &&\forall g\in\Gt
    \end{alignat} 
\end{subequations}

(6) {\it Risk-Aware {\cblue RT Consumer Exposure} Model: }Lastly, we address the {\cblue RT consumer exposure} in {\cblue RT}. { As described above we focus, in particular, on a critical and impactful set of time periods $\Trt$ which is \textit{expected} to have high load and wind generation volatility}. We assume that such time periods can be forecasted with significant certainty at the time of the SCUC computation -- as stated above, empirical evidence suggests that this capability already exists. Our model provides a robust estimate for the  {\cblue RT consumer exposure} during these critical time periods, taking into account the volatility of load and wind generation. 
{\cblue By focusing on these time periods, the model protects the consumers from extremely high excess {\cblue RT} payments while maintaining tractability.}
 
 More specifically, if the load at some bus $i$ and time $\tstar$ is higher in {\cblue RT} than anticipated in the first-stage, then the consumer needs to procure the difference at its {\cblue RT} LMP. However, there is no {\cblue RT} penalty for overestimating load in the first stage. 
More precisely, suppose that $\vomega$ indicates a \textit{realization} of all quantities that are uncertain at the {\cblue DA} phase, and known in {\cblue RT}, including in particular the vector $\drt$ of {\cblue RT} loads. Let $\lambdayo$ be the corresponding {\cblue RT} LMP, where the notation stresses the dependence on the unit-commitment vector $\vy$ and the uncertain quantities $\vomega$. Then the {\cblue total RT consumer exposure at all buses,} averaged over the number of time periods per hour in the critical time period $\Trt$, given realization $\vomega$, is given by the following expression: 
\begin{align}\label{eq: uc_6}
    V = V(\vy, \vomega) = \frac{1}{\Nint} \sum_{t\in\Trt}\sum_{i\in\N} \lambdayo(\drt - \dstarbar)^+.
\end{align}
Note that unlike $\Tda$, the time intervals in $\Trt$ could have a finer granularity than 1 hour, as in practice {\cblue RT} models may be solved every 5 or 15 minutes. Let $\Trth$ be the set of hours in $\Trt$. Then $\Nint = \frac{|\Trt|}{|\Trth|}$ is the number of time periods per hour. For example, if the {\cblue RT} time interval is 5 minutes, then there are 12 time intervals per hour. 
We point out that \eqref{eq: uc_6} is nonconvex because of the bilinear term $\lambdayo \drt$ and the expression $(\drt - \dbar)^+$.
{\cblue We also note that \eqref{eq: uc_6} considers the total RT consumer exposure. Additional regularization terms or fairness considerations \cite{xinying2023guide} are possible but beyond the scope of this paper.}

Several quantities in this expression are unknown at the time of the {\cblue DA} computation, i.e., for a given $\vy$; therefore, $V$ is a random variable. Thus, a major component of our algorithm is the estimation of $\hat V$, the risk term in \eqref{eq: uc_0}, as a \textit{robust} estimate for $V$, through appropriate constraints to be added, incrementally, to the formulation of our risk-averse SCUC problem. Section \ref{subsec: advrealtime} describes the robust modeling of $V$ and algorithmic details are deferred to   Section \ref{sec: solution}.

(7) {\it Full Formulation:}  We now present the risk-aware SCUC optimization problem in full.
\begin{subequations}\label{eq: fullproblem}
\begin{alignat}{4}
\min~&\sum_{t\in\Tda}\Big(\sum_{g\in\G} \left(h_{gt} + \cstart v_{gt} + \cdown w_{gt}\right)\nonumber\hspace{-3cm}\\&+ \sum_{i\in\N} \cvoll \punmet\Big)+\rho \hat V (\vy)\hspace{-4cm}\\
\st~&\eqref{eq: uc_1},\eqref{eq: uc_2},\eqref{eq: uc_3},\eqref{eq: uc_5}  &&\forall t\in\Tda\label{eq: fullproblem_1_1}\\
& \eqref{eq: uc_4} \label{eq: fullproblem_1_2}\\
& \eqref{eq: uc_ramp}&&\forall t\in\Tda\sm\{1\}\label{eq: fullproblem_1_3}\\
& \hat V(\vy) \, = \, \max_{\vomega\in\Omega} \frac{1}{\Nint}\sum_{t\in\Trt}\sum_{i\in\N} \lambdayo (\drt - \dstarbar)^+ \hspace{-3.5cm}\nonumber\\&&\label{eq: fullproblem_2}
\end{alignat}
\end{subequations}
where $\Omega$ is an uncertainty set that includes \textit{reasonable, but stress-revealing} {\cblue RT} parameters -- in Section \ref{sec: uncertainty_set} we will indicate how this set $\Omega$ is constructed in a data-driven manner. Constraints \eqref{eq: fullproblem_1_1} - \eqref{eq: fullproblem_1_3} include all constraints in a standard SCUC problem. Constraint \eqref{eq: fullproblem_2} provides a robust estimate for the {\cblue RT consumer exposure}. Note that this constraint models the maximum of the {\cblue RT consumer exposure} expression \eqref{eq: uc_6} over the uncertainty set{\cblue , reflecting the worst-case cost of volatility}. To handle this nonconvex constraint, we describe a decomposition algorithm that replaces \eqref{eq: fullproblem_2} with linear cutting planes in Section \ref{sec: decomposition}. 
\subsection{Adversarial Real-time Operations} \label{subsec: advrealtime}
We now describe our methodology for attaining a measure of robustness in our estimate $\hat V = \hat V(\vy)$, which is the (uncertain) {\cblue RT consumer exposure} $V$ given a UC decision $\vy$. To that end, we next describe our formulation for the uncertain {\cblue RT} DC optimal power flow (DCOPF) problem. We consider uncertainty in both {\cblue RT} load $\drt$ and {\cblue RT} wind generation $\prt$, and let $\vdrt$ and $\vprt$ be their corresponding vectors.  We assume that the pair $(\vdrt, \vprt)$ belongs to the uncertainty set $\Omega$, and thus $\vomega = (\vdrt, \vprt)$.
Given a set of commitment decisions, we are interested in the pair $(\vdrt, \vprt) \in \Omega$ that attains the peak {\cblue RT consumer exposure}. To  construct such a pair, 
we denote the vector of the DCOPF problem decision variables as $\vxrt$, the vector of all fixed commitment decisions (selected {\cblue DA}) as $\vystar$, the DCOPF feasible region as $\Xrt(\vystar, \vomega)$, which is parameterized by commitment decisions and uncertain quantities, and the DCOPF problem itself as $\dcopf$, which is defined as follows:
\begin{align}\label{eq: dcopf_0}
    \min_{\vxrt\in\Xrt(\vystar, \vomega)}\sum_{t\in\Trt}\left(\sum_{g\in\mc{G}} h_{gt}+ \sum_{i\in\mc{N}} \cvollrt \punmet\right).
\end{align}
Note that we set the {\cblue RT} VOLL $\cvollrt > \cvoll$ to penalize unsatisfied {\cblue RT} load.

Next, the right-hand side (RHS) of the load constraint \eqref{eq: uc_2_1} is replaced with {\cblue RT} load $\drt$:
\begin{align}\label{eq: dcopf_1}
    &\sum_{g\in\G_i} p_{gt} + \punmet+ \sum_{(j,i)\in\L}f_{jit}- \sum_{(i,j)\in\L} f_{ijt} = \drt\hspace{-1.5cm}\nonumber\\&&&\forall i\in\mc{N}.
\end{align}
The dual of this constraint is the {\cblue RT} LMP $\lambda_{it}$.

In addition, we fix the commitment decisions in constraints \eqref{eq: uc_1}, \eqref{eq: uc_5_1} and \eqref{eq: uc_5_2}, and replace the RHS of \eqref{eq: uc_5_3} with {\cblue RT} wind generation:
\begin{subequations}\label{eq: dcopf_2}
\begin{alignat}{4}
& h_{gt} \geq C^1_{og} p_{gt} + C^0_{og}y^*_{gt}&&\forall o\in\mc{O}, g\in\Gt \label{eq: dcopf_2_0}\\
&p_{gt} \leq P^\max_g \ystar \qquad &&\forall g\in\Gt \label{eq: dcopf_2_1}\\
& p_{gt} \geq P^\min_g \ystar \qquad && \forall g\in\Gt\label{eq: dcopf_2_2}\\
& p_{gt} + \pcurtail = \prt \qquad && \forall g\in\Gw.\label{eq: dcopf_2_3}
\end{alignat}
\end{subequations}

To reformulate \eqref{eq: uc_ramp}, note that in real time the ramping rate $M_g$ is prorated to $\Mrt$ based on the time intervals in $\Trt$. Denoting the {\cblue RT} limit on ramping as $\hat{M}_g$. Then $\hat{M}_g = \Mrt$ if $y^*_{g,t-1}=y^*_{g,t} = 1$, i.e., when the generator is on for both the previous and current time intervals; Otherwise, $\hat{M}_g=P^\min_g$. Constraints \eqref{eq: uc_ramp} are reformulated as:
\begin{subequations}\label{eq: dcopf_ramp}
\begin{alignat}{4}
    & p_{gt} - p_{g, t-1} \leq \hat{M}_g \qquad &&\forall g\in\Gt\\
    & p_{g,t-1} - p_{gt} \leq \hat{M}_g\qquad &&\forall g\in\Gt
\end{alignat}
\end{subequations}

Hence, the feasible region of the DCOPF problem, given the UC decision $\vy^*$ and realization $\vomega$, is defined as follows:
\[
  \Xrt(\vy^*, \vomega) := \Set{ \vxrt | \begin{aligned}
    & \eqref{eq: uc_2_2}, \eqref{eq: uc_3},\eqref{eq: uc_5_4}, \eqref{eq: uc_5_5}\\
    &\eqref{eq: dcopf_1}, \eqref{eq: dcopf_2},\eqref{eq: dcopf_ramp}\end{aligned}~~,\forall t\in\Trt}.
\]

Above, to simplify notation, we re-use notation for {\cblue DA} decision variables (e.g. $h_{gt}$ and $p_{gt}$) in the {\cblue RT} problem, in which case they represent {\cblue RT} decisions.  In what follows, the dual of the DCOPF problem given $\vystar$ and $\vomega$
will be denoted by $\dcopfd$.

We now describe how to implement constraint 
\eqref{eq: fullproblem_2}.  First, the {\cblue RT} LMP $\lambda_{it}$ (again given  $\vy^*$ and $\vomega$) is an optimal solution to $\dcopfd$. Thus, \eqref{eq: fullproblem_2} amounts to finding uncertain quantities and resulting {\cblue RT} LMPs that maximize the {\cblue RT consumer exposure} in the critical time period $\Trt$, as given in \eqref{eq: uc_6}. We define the adversarial DCOPF problem as:
\begin{subequations}\label{eq: dcopf_adv}
\begin{alignat}{4}
\dcopfADV :=\hspace{-2.5cm}\nonumber\\
    &\max_{\vomega \in \Omega} ~&&\frac{1}{\Nint}\sum_{t\in\Trt}\sum_{i\in\N}\lambdart (\drt - \dbar)^+\label{eq: dcopf_adv_obj}\\
    &\st~&&\lambdart \text{~is an optimal solution of~}\dcopfd.\label{eq:dcopf_adv_1}
\end{alignat}
\end{subequations}

One way to write constraint \eqref{eq:dcopf_adv_1} explicitly, is to write a primal-dual formulation for $\dcopf$ and utilize strong duality, as in \citep{dvorkin2017ensuring}. However, 
since \eqref{eq: dcopf_adv_obj} and the resulting strong duality constraint are nonlinear, $\dcopfADV$ may be difficult to solve. As an alternative, in Section \ref{sec: maxmin} we present a grid search method to approximately solve $\dcopfa$ and find adversarial stressors.

\subsection{Data-Driven Uncertainty Set}\label{sec: uncertainty_set}
Our choice of the uncertainty set $\Omega$ is driven by the the goal to adapt the SCUC computation to better deal with \textit{volatility}. Toward this end, our uncertainty set $\Omega$ is constructed through an approximate, PCA (principal component analysis)-driven representation of the \textit{covariance} of loads and renewable outputs. Both load and renewable exhibit locational correlation and we find that a few leading modes of the covariance matrix are sufficient to explain total variability \citep{liang2022weather}. The particular methodology we follow is motivated by the ``factor stressing" technique employed in the financial services industry \cite{haughfactor}.


More specifically, for each time $t\in\Trt$ we obtain data for {\it recent past} (e.g., hourly observations {\cblue of one month of data leading up to the modeled day}), and use such data to construct a covariance matrix. The number of rows of the load covariance matrix equals $|\mc{N}|$ and the number of rows of the wind covariance matrix equals the number of wind farms. We conduct a spectral decomposition on a covariance matrix to obtain its eigenvalues and standardized eigenvectors. The eigenvectors corresponding to the $K$ largest eigenvalues (sorted from large to small), i.e., the $K$ leading modes, are used to construct our set $\Omega$. 

Let us consider loads (a similar process is used for wind). Let $k \in\{1,\hdots,K\}$ be the index for the covariance matrix modes (in leading order, i.e., the first mode corresponds to the largest eigenvalue), and $\vqd := (Q^{d}_{k1t}, Q^{d}_{k2t}, \hdots, Q^d_{k|\mc{N}|t})^\top$ be the $k^{th}$ eigenvector for the covariance matrix at time $t$. For each $k \le K$ we will adversarially compute a quantity $\alphad \ge 0$, which is the ``stressor" that magnifies the load variability along the $k^{th}$ leading mode at time $t$. Similarly, $\vqw$ and $\alphaw$ are the eigenvector and stressor of wind power. 

Using this notation we can describe the formal set $\Omega$ from which we select our load $\drt$ and wind power $\prt$ scenarios.  It is given by the pairs $(\drt, \prt)$ satisfying:
\begin{subequations}\label{eq: sub_adversarial}
\begin{alignat}{4}
& \drt = \dbar + \sum_{k=1}^K Q^d_{kit} \alpha^d_{kt} \hspace{-1.8cm}&&&\forall i\in\mc{N}, t\in\Trt\label{eq: sub_adversarial_0}\\
& \prt = \pmaxbar + \sum_{k=1}^K Q^w_{kgt} \alpha^w_{kt} \hspace{-4.5cm}\nonumber\\& &&&\hspace{-5.5cm}\forall g\in\Gw, t\in\Trt \label{eq: sub_adversarial_1}\\
& |\sum_{k=1}^K \alpha^{ind}_{kt}| \leq \Sigma^{ind} &&&\forall t\in\Trt, ind\in\{d, w\}\label{eq: sub_adversarial_2}\\
& |\alpha^{ind}_{kt}|\leq R^{ind}  &&&\hspace{-0.4cm} \forall k=1,\hdots,K; t\in\Trt, ind\in\{d, w\}\label{eq: sub_adversarial_3}\\
& \drt \geq 0 &&&\forall i\in\mc{N}, t\in\Trt\label{eq: sub_adversarial_4}\\
& \prt \geq 0 &&& \forall g\in\Gw, t\in\Trt\label{eq: sub_adversarial_5}.
\end{alignat}
\end{subequations}
Constraints \eqref{eq: sub_adversarial_0} represent the {\cblue RT} load scenarios as the sum of the forecast and stressed leading modes. Similar expressions are derived for wind power in constraints \eqref{eq: sub_adversarial_1}. Constraints \eqref{eq: sub_adversarial_2} and \eqref{eq: sub_adversarial_3} bound the stressors and control the level of conservatism{\ccblue , where $R^{ind}$ is the bound for stressors. }{\cblue Note that $\Sigma^{ind}$ denotes the bound for the absolute value of summation of stressors at each time period. In practice, it can be determined by historical observations.} Constraints \eqref{eq: sub_adversarial_4} and \eqref{eq: sub_adversarial_5} ensure load and wind power are non-negative.

{\cblue In sum, to construct a vector of load scenarios $\vdrt_t$ at $t\in\mc{T}$, we first compute a covariance matrix based on historical load data. We obtain K eigenvectors ($\vqd$, k = 1,...,K) corresponding to the K largest eigenvalues of this covariance matrix as the leading modes. The vector $\vdrt_t$ is the sum of load forecast and stressed leading modes, i.e., $\vdrt_t = \bf{\bar{D}}_t + \sum_{k=1}^K$ 
$\alphad$ $\vqd$, where the stressors are variables controlled by \eqref{eq: sub_adversarial_2} and \eqref{eq: sub_adversarial_3}. The wind production scenarios are similarly constructed. The uncertainty set is thus given by $\Omega = \{(\vdrt, \vprt)|\eqref{eq: sub_adversarial}\}$.
We note that our method can be modified to use a single covariance matrix to generate scenarios for the entire vector of uncertain parameters (i.e., load and wind).
We opt to model load and wind independently, i.e., using two separate covariance matrices, to reflect the typically insignificant correlation between load and wind forecast errors and to better study their individual impact on the solution.

Our uncertainty set captures the locational correlation without substantially increasing the number of variables. In comparison, the polyhedral uncertainty set in \cite{bertsimas2012adaptive} independently generates scenarios in each location, thus ignoring locational correlation. The data-driven uncertainty set proposed in \cite{velloso2019two} captures both spatial and temporal dependence of the renewable production by expressing it as a convex combination of historical observations. To effectively represent extremely adverse scenarios, this approach potentially needs to include a large number of past observations, each requiring a new variable. The ellipsoidal uncertainty set also captures covariance information. It consists of second-order cone constraints and can be defined via the lower triangular matrix of the Cholesky decomposition for the inverse-covariance matrix \cite{johnstone2021conformal}, while our uncertainty set has a simpler and more intuitive structure. In \cite{zhao2022sustainable}, scenarios are generated from a data-driven uncertainty set based on PCA, where stressors are modeled with {\ccblue kernel} density estimation (KDE). In contrast, we impose constant bounds on the stressors, reducing the number of variables and allowing a faster computation and the use of a grid search heuristic to speed up the algorithm, as described in Section~\ref{sec: maxmin}.} 
\section{Solution Approach}\label{sec: solution}
Solving the risk-aware optimization model \eqref{eq: fullproblem} directly (e.g., using off-the-shelf solvers) is difficult because the first-stage problem contains both integer variables and bilinear terms (in constraint \eqref{eq: fullproblem_2}). 
In Section \ref{sec: decomposition}, we show how to solve this two-stage problem \eqref{eq: fullproblem} by relying on an iterative decomposition algorithm that approximates constraint \eqref{eq: fullproblem_2} with increasing accuracy. The procedure is guaranteed to finitely terminate with an optimal solution to \eqref{eq: fullproblem}. Section \ref{sec: cuts} introduces the cutting planes used in the procedure. Section \ref{sec: maxmin} describes an adversarial procedure that is used to improve the accuracy of the approximate formulation. We include
 additional implementation details in Section \ref{sec: impl_details}.
\subsection{Decomposition Algorithm}\label{sec: decomposition}
To solve \eqref{eq: fullproblem}, we rely on an iterative method which is similar to the Benders' decomposition scheme \cite{bnnobrs1962partitioning}. We will next provide a simplified version of this method, and later we will amend this outline in order to handle integer variables.

Under this simplified procedure, at any intermediate point of the algorithm we will have a relaxation for \eqref{eq: fullproblem} that we term the ``master problem" which is initially obtained from \eqref{eq: fullproblem} by removing \eqref{eq: fullproblem_2} and replacing it with $\hat V \ge 0$. As the procedure iterates, cuts are added to the master problem to approximate  \eqref{eq: fullproblem_2} with increasing accuracy.

Suppose that at some iteration the solution to the master problem is given by vector $\vy^*$. This vector may not solve problem \eqref{eq: fullproblem} since it may not satisfy \eqref{eq: fullproblem_2}.  To check whether this is the case, $\vy^*$ is input to a procedure given in Section \ref{sec: maxmin}. If this procedure verifies that $\vy^*$ is feasible for \eqref{eq: fullproblem_2}, then $\vy^*$ is optimal for \eqref{eq: fullproblem}. Otherwise, a cutting plane is generated for strengthening the master problem relaxation.
Figure \ref{fig: decomposition} provides an illustration of the procedure, which is a decomposition algorithm. In this figure, $\Pmaster$ is the master problem and $\dcopfa$ is the adversarial problem given $\vy^*$. Additional algorithmic details such as grid search, partially fixing $\vy$, and branch-and-cut will be introduced in later sections.


More specifically, the initial master problem $\Pmaster$ is:
\begin{align*}
\Pmaster := \min~~&\eqref{eq: uc_0}\\
\st~~&\eqref{eq: uc_1},\eqref{eq: uc_2},\eqref{eq: uc_3},\eqref{eq: uc_5}  &&\forall t\in\Tda\\
& \eqref{eq: uc_4}, \Vhat\geq 0\\
& \eqref{eq: uc_ramp}&&\forall t\in\Tda\sm\{1\}.
\end{align*}
   This master problem includes the integer unit-commitment variables, which necessitates an appropriate adjustment to the proposed methodology, as explained in Section \ref{sec: impl_details}.
 
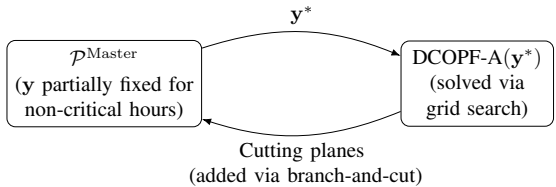
\begin{figure}[hbpt]
\centering
\begin{adjustbox}{minipage=\linewidth,scale=0.8}
\begin{tikzpicture}[node distance = 5cm,auto]
\node [block, text width=3cm] (master) {$\Pmaster$\\ ($\vy$ partially fixed for non-critical hours)};
\node [block, right=3.3cm of master, text width=2.3cm] (sub) {$\dcopfa$\\(solved via grid search)};
\draw[{Latex[length=1.5mm]}-] (sub) to [bend right=20] node [above, midway, align=center] (text1) {$\vy^*$} (master);
\draw[{Latex[length=1.5mm]}-] (master) to [bend right=20] node [midway,below,align=center] (text1) {Cutting planes\\~(added via branch-and-cut)} (sub);       
\end{tikzpicture}
\end{adjustbox}
\caption{The decomposition algorithm}\label{fig: decomposition}
\end{figure}
\subsection{Cutting Planes}\label{sec: cuts}
The cutting planes we use include no-good  \cite{jiang1994nogood}, integer L-shaped  \cite{laporte1993integer}, and LBBD cuts \cite{hooker2003logic}. Together these cuts yield a lower bound for the worst-case {\cblue RT consumer exposure} $V$ given UC decision $\vy$. 
These cuts have the generic form $\Vhat(\vy) \geq f(\vy)$, where $f(\vy)$ is a linear function of the commitment decisions $\vy$, and satisfies the following conditions: 
\begin{itemize}
\item For all feasible UC decisions of $\vy$, $f(\vy)$ must be an underestimator for the worst-case {\cblue RT consumer exposure} given $\vy$, i.e., $\max_{\vomega\in\Omega} \frac{1}{\Nint}\sum_{t\in\Trt}\sum_{i\in\N} \lambdayo(\drt - \dstarbar)^+ $.  Moreover,
the inequality $\hat V(\vy) \ge f(\vy)$ should cut off the current master problem solution.
\item Let $\vy^*$ be the commitment decisions in the current master problem solution. In a neighborhood of $\vy^*$, $f(\vy)$ should, in addition, be a close lower bound to the worst-case {\cblue RT consumer exposure} given $\vy$.  The strength of the cut rests on how close this approximation is.
\end{itemize}

We first present a no-good cut which only provides a good lower bound at the current solution $\vy^*$, then we develop integer L-shaped cuts and LBBD cuts that aim to improve this lower bound for solutions in a neighborhood of $\vy^*$. Throughout, we assume that the {\cblue worst-case {\cblue RT consumer exposure} $\uhat$ given $\vystar$ is attained by profile $\bm{\omega}^*$ (including loads $\drthat$ as per DCOPF-A($\bm{y}^*$) in \eqref{eq:dcopf_adv_1}), i.e., that $\uhat = \uhat(\vystar) :=   
\frac{1}{\Nint}\sum_{i\in\N}\sum_{t\in\Trt}\lambdaystaromegastar(\drthat - \dbar)^+$.}

\subsubsection{No-good Cuts}
Let $\vy^*$ be the commitment decisions in the current master problem solution. Define $\Ione = \Set{g|y_{gt}^* = 1, \forall g\in \Gt}$ and $\Izero = \Set{g|y_{gt}^* = 0, \forall g\in \Gt}$ respectively as the set of thermal generators that are on/off at time $t$. Let $\drthat$ and $\lambdahat$ be the optimal primal and dual solution values, in $\dcopfa$, for $\drt$ and $\lambda_{it}$, respectively. The no-good cut is as follows:
\begin{align}\label{eq: no_good}
\Vhat(\vy) \geq &\nonumber \\ &\hspace{-1cm}\uhat(\vystar)\left(1- \sum_{t\in\Trth}\left(\sum_{g\in \Ione} (1-y_{gt}) + \sum_{g\in \Izero} y_{gt}\right)\right).
\end{align}
When $y_{gt} = y^*_{gt}, \forall g\in\Gt, t\in\Trth$, the cut can be simplified to $\Vhat(\vy) \geq \uhat (\vystar)$, which provides the correct value for $\Vhat(\vy)$. Otherwise, when at least one generator in $\Ione$ is turned off or at least one generator in $\Izero$ is turned on during $\Trt$, the RHS of the cut becomes non-positive.

The no-good cut is not very strong as it only provides a good underestimator at the current master solution. In fact, as there are $2^{|\Gt||\Trth|}$ possible values for the vector $\vy$, the algorithm could run through $\O(2^{|\Gt||\Trth|})$ iterations to find the optimal solution if only no-good cuts are used.
\subsubsection{Integer L-Shaped Cuts}
The integer L-shaped cut strengthens the no-good cut by improving a lower bound at some feasible commitment solutions in a neighborhood of $\vy^*$:
\begin{align}\label{eq: l_shaped}
\Vhat(\vy) \geq &\nonumber\\& \hspace{-1cm}\uhat(\vystar) + a\sum_{t\in\Trth}\left(\sum_{g\in\Ione}y_{gt} - \sum_{g\in\Izero}y_{gt} - |\Ione|\right),
\end{align}
where $a = \max(\uhat - \hat V_1, (\uhat - \hat V_0)/2)$, $\hat V_1$ equals the minimum value of the {\cblue RT consumer exposure} when exactly one generator changes its commitment decision (i.e., when $\sum_{g\in\Ione}y_{gt} - \sum_{g\in\Izero}y_{gt} = |\Ione| - 1$, which we call the {\it 1-neighbors} of $\vy^*$), and $\hat V_0$ is a lower bound on the {\cblue RT consumer exposure} under a feasible commitment decision (e.g.,  trivially $\hat V_0 = 0$ can be used).  

To see that the cut is valid, notice that for the 1-neighbors of $\vy^*$, the integer L-shaped cut provides a lower bound for $\Vhat(\vy)$, namely $\Vhat(\vy) \geq \min(\hat V_1, \frac{\uhat + \hat V_0}{2}).$ This lower bound is valid because $\min(\hat V_1, \frac{\uhat + \hat V_0}{2})\leq \hat V_1$.  And if there is more than one generator that changes its commitment decision, then the RHS of \eqref{eq: l_shaped} is no more than $\hat V_0$, and thus is valid. 

Note that to obtain $\hat V_1$, we could solve $\dcopfa$ for each $\vy$ among the 1-neighbors of $\vystar$, which requires solving $|\Trth|(|\Gt| - 1)$ subproblems. In our experiments, we observe that the integer L-shaped cuts indeed lead to faster convergence than the no-good cuts.  We point out that since $\min(\hat V_1, \frac{\uhat + \hat V_0}{2})\geq 0$, it is generally a better lower bound than that provided by a no-good cut.  However, the computational efforts to generate an L-shaped cut increases as more subproblems are solved to get $\hat V_1$.
\subsubsection{LBBD Cuts}
Another way to strengthen the no-good cut is to use the following LBBD cut:
\begin{align}\label{eq: lbbd-detailed}
& \Vhat(\vy) \geq \sum_{t\in\Trth}\left(\uhat_t(\vystar)\left(1-\sum_{g\in\Izero}y_{gt}\right)\right),
\end{align}
\noindent where $\uhat_t(\vystar) :=   
\frac{1}{\Nint}\sum_{\tau\in\Trt(t)}\sum_{i\in\N}\lambda_{i\tau}(\vystar,\vomega^*)(d^{\rm{RT}*}_{i\tau}-\bar{D}_{i\tau})^+$, with $\Trt(t)$ being the set of {\cblue RT} time periods in hour $t$. Note that $\uhat(\vystar) = \sum_{t\in\Trth}\uhat_t(\vystar)$. When any generator in $\Izero$ is turned on,  the term corresponding to hour $t$ in the RHS becomes non-positive. On the other hand, if all generators in $\Izero$ stay off, then this cut enforces that the {\cblue RT consumer exposure} is at least $\uhat_t(\vystar)$. Intuitively, if all generators in $\Izero$ and some generators in $\Ione$ are turned off, the {\cblue RT consumer exposure} is not likely to drop. In Proposition \ref{th: lbbd}, we prove that this lower bound is valid when the network is not congested and the ramping constraints are not binding in the {\cblue RT} market. As we will also explain, the cut is still good for our purpose even when those restrictions are relaxed.




\begin{proposition}\label{th: lbbd}
When there is no congestion in the network and the ramping constraints are not binding in the {\cblue RT} market, the LBBD cut \eqref{eq: lbbd-detailed} provides a correct lower bound for $\Vhat(\vy)$. Also, it provides the exact value of $\Vhat(\vy)$ at the current solution $\vy^*$.
\end{proposition}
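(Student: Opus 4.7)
The plan is to establish the two claims of the proposition in sequence, using that $\Vhat(\vy) \ge V(\vy, \vomega^*)$ for any feasible realization, in particular the adversarial $\vomega^*$ associated with $\vy^*$. Tightness at $\vy^*$ is immediate: substituting $\vy = \vy^*$ makes each $\sum_{g\in\Izero} y^*_{gt}$ equal to zero, so the right-hand side of \eqref{eq: lbbd-detailed} collapses to $\sum_{t\in\Trth}\uhat_t(\vystar) = \uhat(\vystar) = \Vhat(\vystar)$, which is the desired exact value.

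For validity, I would decompose the right-hand side of the cut hour by hour and bound the contribution of $V(\vy, \vomega^*)$ on $\Trt(t)$ from below by the corresponding term. Because there is no congestion, the RT LMPs $\lambda_{i\tau}(\vy, \vomega^*)$ are spatially uniform across buses $i$ at each $\tau$; because RT ramping is non-binding, the per-period RT dispatches decouple, with the only coupling to $\vy$ being through the hourly commitment $\{y_{gt}\}_g$ at the hour containing $\tau$. Hence $V(\vy, \vomega^*) = \sum_{t\in\Trth} W_t(\vy)$ with $W_t(\vy) := \frac{1}{\Nint}\sum_{\tau\in\Trt(t)}\sum_{i\in\N}\lambda_{i\tau}(\vy,\vomega^*)(d^{\rm{RT}*}_{i\tau}-\bar{D}_{i\tau})^+$, and it suffices to compare $W_t(\vy)$ with $\uhat_t(\vystar)(1 - \sum_{g\in\Izero} y_{gt})$ term by term. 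If $\sum_{g\in\Izero} y_{gt} = 0$ at hour $t$, then the committed set at $t$ under $\vy$ is a subset of $\Ione$; in a merit-order single-bus dispatch, shrinking this set can only maintain or raise the LMP, so $\lambda_{i\tau}(\vy, \vomega^*) \ge \lambda_{i\tau}(\vystar, \vomega^*)$ for every $\tau\in\Trt(t)$. Since the excess load $(d^{\rm{RT}*}_{i\tau}-\bar{D}_{i\tau})^+$ depends only on $\vomega^*$, this yields $W_t(\vy) \ge W_t(\vystar) = \uhat_t(\vystar)$, matching the cut's hourly term. If instead $\sum_{g\in\Izero} y_{gt} \ge 1$, the hourly right-hand side term is non-positive while $W_t(\vy) \ge 0$ by non-negativity of the LMP and of the excess load, so the bound is trivial. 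Summing across $t\in\Trth$ yields $\Vhat(\vy) \ge V(\vy, \vomega^*) \ge$ the right-hand side of \eqref{eq: lbbd-detailed}.

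The main obstacle is the monotonicity claim used in the first sub-case: that shrinking the committed set from $\Ione$ cannot lower the LMP. I would justify this via LP sensitivity on the per-period RT dispatch—the demand balance constraint's dual $\lambda$ equals the marginal cost of serving an extra unit of load, and in a merit-order dispatch with unmet-load penalty $\cvollrt$ this marginal cost is non-decreasing as generator capacity is withdrawn (the unmet-load regime is covered because $\cvollrt$ exceeds every generator's marginal cost). The other two assumptions play essential supporting roles: no congestion collapses the DCOPF to a single-bus merit-order problem per $\tau$, and non-binding ramping decouples adjacent RT periods, so that commitment changes at hour $t$ only affect LMPs at $\tau\in\Trt(t)$—exactly what the per-hour structure of the cut requires. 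When either assumption is relaxed, the decomposition and monotonicity become only approximate; this is presumably the sense in which the authors will argue the cut remains ``still good'' in practice without being provably valid.
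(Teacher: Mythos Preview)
Your proposal is correct and follows essentially the same route as the paper's proof: the same two-case split at each hour (whether any generator in $\Izero$ is switched on), the same use of ``no congestion'' to collapse to a single-bus merit-order dispatch, and the same monotonicity claim that removing committed units cannot lower the LMP. Your LP-sensitivity/merit-order justification of that monotonicity is what the paper spells out via its type (a)/(b)/(c) classification of generators, and you are slightly more explicit than the paper about why non-binding ramping is needed (to decouple the per-$\tau$ dispatches so that commitment at hour $t$ affects only $\tau\in\Trt(t)$).
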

\begin{proof}
Since there is no congestion, the LMPs at all nodes are equal, which we denote as $\lambda_t$. 

Consider the fixed wind and load profile $\vomega^*$ that defines the worst-case for $\vystar$ and fixed hour $t \in \Trth$. Let $\vy'$ be a feasible dispatch solution from the master problem, and $\Ionep$ and $\Izerop$ respectively be the corresponding set of generators that are on and off at $t$.  We will show that the {\cblue RT consumer exposure} term arising from hour $t$, namely $\uhat_t(\vystar)\left(1-\sum_{g\in\Izero}y_{gt}\right)$, is a lower bound for the {\cblue RT consumer exposure} at $t$, if we switch from $\vystar$ to $\vy'$. 
 We consider the following two cases:

{\it Case 1: } If $\Izerop \subset \Izero$ for some $t\in\Trth$, then $\sum_{g\in\Izero} y'_{gt}\geq 1$, which indicates that the RHS of \eqref{eq: lbbd-detailed} corresponding to $t$ is no more than 0. 

{\it Case 2: } If all generators that were off during $t$ under $\vy^*$ remain off under $\vy'$, then $\Izero \subseteq \Izerop$ and the RHS of \eqref{eq: lbbd-detailed} corresponding to $t$ remains unchanged. Thus, we need to show that this RHS value is a valid lower bound.

This is the same as showing that the LMP at every time $\tau\in\Trt(t)$ does \textit{not} decrease when we switch from $\vystar$ to $\vy'$.  To see this, note that any generator $g$ that is on at time $\tau$ under $\vystar$ must be of one of three types:
\begin{itemize}
\item [(a)] It defines the LMP, i.e., $h_{g,\tau} = \lambda_\tau$, or
\item [(b)] It satisfies $h_{g,\tau} < \lambda_\tau$, in which case the generator is operating at its maximum output, or
\item [(c)] The generator satisfies $h_{g,\tau} > \lambda_\tau$, in which case the generator is operating at its \textit{minimum} output level.
\end{itemize}
Moreover, the total load is equal to the maximum output of the generators in type (b), plus the minimum output from generators of type (c), plus the output of the generators of type (a). A similar characterization can be obtained for $\vy'$.  From this characterization it is
clear that if we turn off a generator the LMP cannot decrease, since turning a generator off reduces total available capacity, and thus the new LMP will be defined either by a generator of type (a) under $\vystar$ (in which case the LMP does not change) or by a generator of type (c) under $\vystar$, in which case the LMP strictly increases. The new LMP could also equal $\cvoll$, which is the highest value it can reach.

Finally, if $\vy'=\vy^*$, then  the RHS of \eqref{eq: lbbd-detailed} provides the exact value for the worst-case {\cblue RT consumer exposure}.
\end{proof}

Note that when there is congestion in the network, the result in Case 2 of the proof may not hold because prices could drop at certain locations even if only a strict subset of generators are turned on. {\cblue Consequently, the LBBD cut may overestimate the {\cblue RT consumer exposure}.} Also, if some of the ramping constraints are binding, then the production variables at $t-1$ and $t$ are coupled, which complicates the proof of Case 2, as the selection of marginal generator could be affected by the production levels of previous time periods (an example for this is provided in Appendix \ref{ch: ramp_example}).

Although we are not able to provide a formal proof for the quality of the cut in the general case, in our experiments we observe that using LBBD cuts with the decomposition algorithm returns correct solutions for the majority of instances. Note that the NYISO system in our case study does not have a lot of congestion. Also, $\Trt$ in Section \ref{ch: results_in_sample} of the case study contains a single time period and thus the {\cblue RT} problem does not have ramping constraints. Even if the cut leads to an overestimation (which is usually very small) of the {\cblue RT consumer exposure}, it may only lead to less adversarial stressors, which may still provide robust enough SCUC solutions.

The LBBD cut implies the no-good cut \eqref{eq: no_good}, and thus is strictly stronger than the no-good cut. Also, the LBBD cuts lead to much faster convergence compared with no-good cuts and do not require extra computational efforts to generate like for the integer L-shaped cuts. 
\subsection{Solving $\dcopfa$}\label{sec: maxmin}
Instead of directly solving the nonconvex problem $\dcopfa$, we employ grid search to solve it approximately. In grid search, we iterate through a set of fixed stressors (i.e, ``grids") $(\alphadtil, \alphawtil)$ that satisfy constraints \eqref{eq: sub_adversarial}, and find the stressors that lead to the highest {\cblue RT consumer exposure}. 

More specifically, for each fixed pair of stressors $(\alphadtil, \alphawtil)$, we obtain the corresponding scenario $\vomegatil$ using \eqref{eq: sub_adversarial_0} and \eqref{eq: sub_adversarial_1}, and solve $\dcopftil$. We then calculate the corresponding {\cblue RT consumer exposure}. Among all {\cblue RT consumer exposure} calculated in this way, we select the highest one as an estimate to the optimal value of $\dcopfa$, and the corresponding $\dcopftil$ problem provides the optimal solutions for adverse load and wind generation scenarios, and the LMPs.

The grid search method provides an approximation to the true objective of $\dcopfa$. It is difficult to directly solve the highly nonlinear and nonconvex $\dcopfa$ problem, even with state-of-the-art nonlinear optimization solvers such as Gurobi and Knitro. We also observe that the commonly-used McCormick relaxation is not very tight for nonlinear terms in $\dcopfa$. On the other hand, with a carefully-selected set of grids in the grid search, we can find adverse scenarios that lead to cost-saving UC decisions. We explain how to select the set of grids in Section \ref{sec: numerical_data}. {\cblue Note that thanks to the structure of the PCA-based uncertainty set, where uncertain values are correlated, we only need to generate grids for the stressors, and not for the original uncertain values. This greatly reduces the potential dimension of grids.}
\subsection{Implementation Details}\label{sec: impl_details}

The decomposition algorithm in Section \ref{sec: decomposition} requires solving a MIP master problem in each iteration, which is time consuming. Therefore, we instead add the cuts via branch-and-cut \cite{wolsey2020integer}, where we solve the master problem once and the violated cuts are added in the integral nodes of the branch-and-bound tree. {\cblue More specifically, when using an MIP solver to solve the master problem via branch-and-bound, one could check whether a node in the branch-and-bound tree has a feasible (thus integral) solution for the master problem. If so, then we can solve $\dcopfa$ with $\vystar$ fixed at this integral solution, and compare its optimal objective value with the estimated {\cblue RT consumer exposure} from the master problem solution at the current node. If the estimated value is not correct, then we add cuts to this node via the lazy callback function of the MIP solver. We repeat this process, until the stopping criteria for branch-and-bound are met.} We observe that branch-and-cut greatly improves the performance of our algorithm.

Among the three types of cuts derived in Section \ref{sec: cuts}, we can use any one of them or a combination of them in our algorithm. Note that integer L-shaped cuts and LBBD cuts are both stronger than no-good cuts, so there is no need to use no-good cuts if either of the two other cuts is used. Between integer L-shaped cuts and LBBD cuts, one does not dominate the other in terms of strength, so it could be helpful to include both cuts in the algorithm. Since it is time consuming to generate integer L-shaped cuts for large-scale problems, we use only LBBD cuts in our case study.

Compared with commitment decisions from the deterministic SCUC problem, the risk-aware solution usually keeps more capacities committed around critical hours that are represented by the {\cblue RT} market problem. Therefore, we can speed up the algorithm by only allowing commitment decisions around critical hours to deviate from its deterministic counterpart, which greatly reduces the search space of binary on/off decisions. {\cblue This is a heuristic and may lead to sub-optimal solutions. Nonetheless, we observe that our method performs well with this heuristic. Also, it is flexible and in practice we can allow more hours to deviate from deterministic solutions if needed.}

\section{NYISO Case Study}\label{sec: numerical}
\subsection{Data Resource and Simulation Environment}\label{sec: numerical_data}
For the numerical experiments we use an NYISO data set including 1819 buses, 2207 lines, 362 generators and 38 wind farms \cite{liang2022weather}. Note that the 38 wind farms include 33 onshore wind farms that are already built, and 5 offshore farms under construction. The wind power data are calculated from the forecast and {\cblue RT} wind speed, obtained from the NREL WIND Toolkit \cite{nrel_gov,draxl2015wind} {\cblue which provides comprehensive data \textit{and} forecast information with sufficient spatial coverage and resolution.}
The load data are from the NYISO data platform \cite{nyiso}. Due to data availability, we use the load data from August 2018. We use {\cblue RT} wind power and load data of the whole month to {\cblue estimate the covariance matrix as the sample covariance matrix}, calculate leading modes, and pick a windy day in the month to solve the SCUC problem. {\cblue We choose the number of leading modes $K=3$ because from empirical experience the first three modes are enough to explain most variability (more precisely, the three leading eigenvalues of the covariance matrix account for almost all of its Frobenius norm).} The {\cblue DA} and {\cblue RT} VOLL are respectively set at 10,000 \$/MWh and 20,000 \$/MWh, {\cblue to account for the fact that not meeting demand in the {\cblue DA} stage will not necessarily lead to load-shedding in real time.}
The set $\Trt$ includes the hour 6-7 pm, which is a time period with peak load, and we allow commitment decisions to deviate from its deterministic counterpart between 5 pm and 8 pm.

We generate the grids in grid search as follows. First, we fix $\Sigma^{ind} = 3R^{ind}$ and relax constraint \eqref{eq: sub_adversarial_2}. We then allow the stressors $\alpha^{ind}_{kt}$ to be either $R^{ind}$ or $-R^{ind}$ so for each time period $t$, there are a total of 8 (i.e., $2^3$) possible grids. For wind generation $\prt$, we include all 8 grids, while for load $\drt$, we include 2 grids, $[R^d, -R^d, R^d]$ and $[R^d, R^d, -R^d]$, which are usually the most adverse stressors. Note that when $R^{ind}$ is large, it is possible that $\drt$ or $\prt$ becomes negative at one of the grids. If this happens, we adjust the value of the third stressor $\alpha^{ind}_{3t}$ to the largest (if $\alpha^{ind}_{3t} = R^{ind}$) or the smallest (if $\alpha^{ind}_{3t} = -R^{ind}$) value that ensures nonnegativity of $\drt/\prt$. {\cblue Our grid generation strategy is based on the optimal solution of small instances, and it generalizes well to larger instances.}

We run all experiments on a Linux workstation with Intel Xeon processor and 250 GB memory. The programming language is Python v3.8. Optimization problems are solved with Gurobi 10.0.1 \cite{gurobi}. For the master problem, we set the MIP gap to $10^{-3}$, and the time limit to 24 hours. All instances are solved to within 0.5\% optimality gap upon termination.

\subsection{Performance of Risk-Aware Model}\label{ch: results_in_sample}

{\cblue We first study the performance of the risk-aware SCUC assuming that the {\cblue RT} realization is equal to the worst-case outcome.}
Table \ref{tab: in_sample_1} shows the comparison between results from risk-aware and deterministic SCUC problems with different bounds for stressors. $R^d$ takes the values 0.1 and 0.2, and $R^w$ varies from 0.2 to 1.0 with stepsize 0.2. The weight for {\cblue RT consumer exposure} $\rho = 1$. Column {\ccblue ``Save"} shows the savings of the risk-aware SCUC compared with the deterministic SCUC in total costs {\ccblue of DA cost and RT consumer exposure}; ``Deter. cost" reports the total costs of the deterministic SCUC; ``Cost red." equals {\ccblue ``Save"} divided by ``Deter. cost"; ``DA cost diff" is the extra {\cblue DA} cost for carrying out the risk-aware dispatch decisions; {\ccblue ``Consr. exp. diff." equals the RT consumer exposure in the risk-aware SCUC minus its deterministic counterpart}. In addition, Table \ref{tab: in_sample_2} lists the change in total load and wind generation under the selected adverse scenarios. The change in load is calculated as (Total load under adverse scenario - Total expected load)/(Total expected load). The change in wind generation is compared with total expected load, and is calculated as (Total wind generation under adverse scenario - Total expected wind generation)/(Total expected load). Note that for some instances (distinguished by $*$ in the tables) we add 3 cuts at the root node of the branch-and-cut algorithm to bring the optimality gap below 0.5\%. {\cblue In Table \ref{tab: in_sample_2} we also report the optimality gap at the 24-hour time limit \footnotemark and the number of cuts generated. Instances with greater load variations tend to have larger optimality gaps, while the variation in the number of cuts is relatively small.} 

\footnotetext{\cblue We also experimented with a 6-hour time limit, where the optimality gaps for most instances are also below 0.5\%. The optimality gaps become larger (around 1\%) for the last 3 instances, when both $R^d$ and $R^w$ are large. }


\begin{table}[htbp]
\centering
\caption{\small \sc Comparison of Risk-Aware and Deterministic SCUC Problems for $\rho = 1$}\label{tab: in_sample_1}
\begin{tabular}{rrrrrrr}
\hline
\multicolumn{1}{l}{$R^d$} & \multicolumn{1}{l}{$R^w$} & \multicolumn{1}{l}{\ccblue Save}  & \multicolumn{1}{l}{Deter.}     & \multicolumn{1}{l}{Cost}      & \multicolumn{1}{l}{DA cost}  & \multicolumn{1}{l}{\ccblue Consr. exp.}     \\
\multicolumn{1}{l}{}   & \multicolumn{1}{l}{}   & \multicolumn{1}{l}{(k\$)} & \multicolumn{1}{l}{cost (M\$)} & \multicolumn{1}{l}{red. (\%)} & \multicolumn{1}{l}{diff. (\$)} & \multicolumn{1}{l}{{\ccblue diff.} (k\$)} \\ \hline
0.1                    & 0.2                    & 0.00   & 5.37       & 0.00      & 0.00      & {\ccblue 0.00}       \\
0.1                    & 0.4                    & 0.12   & 5.37       & 0.00      & 116.16    & {\ccblue -0.23}       \\
0.1                    & 0.6                    & 0.00   & 5.37       & 0.00      & 0.00      & {\ccblue 0.00}       \\
0.1                    & 0.8                    & 42.29  & 5.41       & 0.78      & 116.16    & {\ccblue -42.41}       \\
0.1                    & 1.0                    & 42.28  & 5.41       & 0.78      & 123.50    & {\ccblue -42.41}       \\
0.2                    & 0.2                    & 114.71 & 5.50       & 2.09      & 116.16    & {\ccblue -114.83}      \\
*0.2                   & 0.4                    & 114.14 & 5.50       & 2.08      & 688.03    & {\ccblue -114.83}      \\
0.2                    & 0.6                    & 115.74 & 5.50       & 2.11      & 1446.22   & {\ccblue -117.18}      \\
0.2                    & 0.8                    & 108.58 & 5.50       & 1.98      & 8130.20   & {\ccblue -116.71}      \\
*0.2                   & 1.0                    & 115.64 & 5.50       & 2.10      & 1072.06   & {\ccblue -116.71}      \\ \hline
\multicolumn{7}{l}{\footnotesize * Instance solved with 3 root cuts.}\\
\end{tabular}
\end{table}

\begin{table}[htbp]
\centering
\caption{\cblue \small \sc Change in Load and Wind Generation (Compared with Total Expected Load) Under Adverse Scenario, Optimality Gap, and Number of Cuts with Risk-Aware SCUC Problem for $\rho = 1$}
\label{tab: in_sample_2}
{\cblue
\begin{tabular}{rrrrrr}
\hline
\multicolumn{1}{l}{$R^d$} & \multicolumn{1}{l}{$R^w$} & \multicolumn{1}{l}{Load}      & \multicolumn{1}{l}{Wind}      & \multicolumn{1}{l}{Opt.}     & \multicolumn{1}{l}{\# Cuts} \\
\multicolumn{1}{l}{}   & \multicolumn{1}{l}{}   & \multicolumn{1}{l}{diff (\%)} & \multicolumn{1}{l}{diff (\%)} & \multicolumn{1}{l}{gap (\%)} & \multicolumn{1}{l}{}        \\ \hline
0.1                    & 0.2                    & 1.18                          & -0.37                         & 0.15                         & 41417                       \\
0.1                    & 0.4                    & 1.18                          & -0.74                         & 0.15                         & 41875                       \\
0.1                    & 0.6                    & 1.18                          & -1.11                         & 0.16                         & 41509                       \\
0.1                    & 0.8                    & 1.18                          & -1.44                         & 0.16                         & 41884                       \\
0.1                    & 1.0                    & 1.18                          & -1.97                         & 0.16                         & 41315                       \\
0.2                    & 0.2                    & 2.36                          & -0.37                         & 0.32                         & 41348                       \\
*0.2                   & 0.4                    & 2.36                          & -0.74                         & 0.33                         & 38988                       \\
0.2                    & 0.6                    & 2.36                          & -1.41                         & 0.33                         & 41162                       \\
0.2                    & 0.8                    & 2.36                          & -1.75                         & 0.47                         & 41238                       \\
*0.2                   & 1.0                    & 2.36                          & -1.97                         & 0.34                         & 38876                       \\ \hline
\multicolumn{4}{l}{\footnotesize * Instance solved with 3 root cuts.}
\end{tabular}}
\end{table}


The risk-aware SCUC reduces total costs for instances with higher variations. When the variation in load and wind generation increases, there tend to be more savings. The change in $R^d$ has a larger impact on cost saving and the {\cblue RT consumer exposure} compared with the change in $R^w$. This is probably because compared with the impact of $R^w$, an increase in $R^d$ leads to a higher change in load, as shown in Table \ref{tab: in_sample_2}. Also, note that the cost saving does not necessarily increase monotonically with $R^w$, as the total costs of both SCUC models increase with more volatility. 

The risk-aware SCUC has a slightly higher {\cblue DA} cost due to the dispatch of additional generation capacity. This is a relatively small addition compared with the cost saving by implementing the risk-aware commitment decisions. 


{\ccblue 
Additionally, we compare the risk-aware SCUC with a two-stage stochastic programming SCUC model (``stochastic SCUC" for short), as shown in Table \ref{tab: in_sample_sto}. Details of the stochastic SCUC implementation are included in Section \ref{ch: sto_scuc} of the Appendix. The column ``Opt. gap" shows the optimality gap of the stochastic SCUC upon termination; ``Save", ``Cost red.", and ``Consr. exp. diff." are similarly defined as in Table \ref{tab: in_sample_1}, except that the deterministic SCUC is replaced by the stochastic SCUC; ``DA cost diff" equals the DA cost of the risk-aware SCUC minus the DA cost of the stochastic SCUC. 

To solve the stochastic SCUC faster and with smaller optimality gaps, we first run it for 24 hours, and then use the obtained solution as a warm start. Despite this approach, many instances still exhibit large optimality gaps. In comparison, all instances of our model are solved without requiring a warm-start solution.

As compared to our approach, the stochastic SCUC results in higher values for both the DA cost and consumer exposure. The increased DA cost is because the stochastic SCUC allocating more generation capacity, to mitigate expected costs due to uncertainty. However, since the stochastic SCUC does not aim to reduce the consumer exposure, and is  designed to hedge against average rather than adverse scenarios, consumer exposure remains high. 


\begin{table}[htbp]
\centering
\caption{\small \sc Comparison of Risk-Aware and Stochastic SCUC Problems for $\rho = 1$}\label{tab: in_sample_sto}
{\ccblue
\begin{tabular}{rrrrrrr}
\hline
\multicolumn{1}{l}{$R^d$} & \multicolumn{1}{l}{$R^w$} & \multicolumn{1}{l}{Opt.}  & \multicolumn{1}{l}{Save}     & \multicolumn{1}{l}{Cost}      & \multicolumn{1}{l}{DA cost}  & \multicolumn{1}{l}{\ccblue Consr. exp.}     \\
\multicolumn{1}{l}{}   & \multicolumn{1}{l}{}   & \multicolumn{1}{l}{gap (\%)} & \multicolumn{1}{l}{(k\$)} & \multicolumn{1}{l}{red. (\%)} & \multicolumn{1}{l}{diff. (k\$)} & \multicolumn{1}{l}{{\ccblue diff.} (k\$)} \\ \hline
0.1 & 0.2 & 3.69 & 243.93 & 4.34 & -185.95 & {\ccblue -57.97} \\
0.1 & 0.4 & 2.22 & 158.57 & 2.87 & -114.88 & {\ccblue -43.69} \\
0.1 & 0.6 & 3.30 & 234.16 & 4.18 & -176.80 & {\ccblue -57.35} \\
0.1 & 0.8 & 2.55 & 190.63 & 3.43 & -133.17 & {\ccblue -57.47} \\
0.1 & 1.0 & 2.32 & 178.97 & 3.22 & -120.88 & {\ccblue -58.09} \\
0.2 & 0.2 & 0.55 & 136.20 & 2.47 & -20.62 & {\ccblue -115.57} \\
*0.2 & 0.4 & 1.00 & 151.53 & 2.74 & -36.65 & {\ccblue -115.57} \\
0.2 & 0.6 & 3.29 & 289.53 & 5.11 & -173.80 & {\ccblue -115.73} \\
0.2 & 0.8 & 0.40 & 121.72 & 2.21 & -5.53 & {\ccblue -116.19} \\
*0.2 & 1.0 & 0.72 & 146.50 & 2.65 & -31.19 & {\ccblue -116.38} \\ \hline
\multicolumn{7}{l}{\footnotesize * Instance solved with 3 root cuts.}\\
\end{tabular}}
\end{table}}

{\cblue 
We now discuss the implication of our method on pricing and market design. Our model aims to obtain a DA schedule that is risk-aware, and we leave the majority of the DA market structure untouched. After getting the schedule, the LMPs can be calculated in the pricing run of the DA market. The generators are paid the LMPs and an uplift payment if needed, and they would have to operate following the outcome of the SCUC, just as in the current practice. 

Figure \ref{fig: fix_price} shows the marginal prices in DA and RT markets during the critical hour with $R^d = 0.2, R^w = 1.0$. Each dot in the plot represents the price at a bus. Note that in our experiment the DA marginal prices are the same for {\ccblue the risk-aware and deterministic} problems, as in DA the risk-aware problem produces electricity with a similar set of generators as its deterministic counterpart. {\ccblue The DA prices of the stochastic SCUC are also similar to those of the other problems.} For RT, risk-aware problem produces prices that are close to DA, while the prices with the deterministic {\ccblue and stochastic models} are both significantly higher and more volatile. }
\begin{figure}[htbp]
  \centering
  \includegraphics[width=0.5\linewidth]{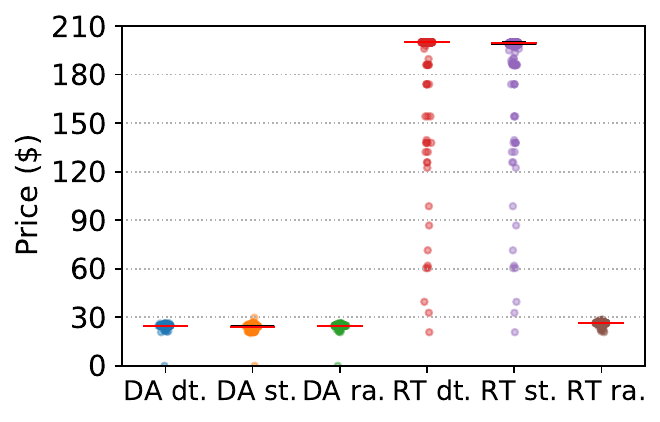}
  \caption{\cblue DA and RT prices with the deterministic{\ccblue, stochastic} and the risk-aware problems (with $R^d = 0.2, R^w = 1.0$). The red line represents the median. In the labels of the horizontal axis, ``dt."{\ccblue, ``st."} and ``ra." are respectively for deterministic{\ccblue, stochastic} and risk-aware.}
  \label{fig: fix_price}
\end{figure}

{\cblue We also observe much higher RT consumer exposure and RT supplier surplus under the deterministic {\ccblue and stochastic models. For the deterministic model, the values} are respectively \$134.50K and {\ccblue \$117.77K}. {\ccblue For the stochastic model, the values are \$134.16K and \$241.03K.} In comparison, the {\ccblue values} for the risk-aware problem are respectively \$17.78K and {\ccblue \$1.36K}.} 

To demonstrate how $\rho$ affects conservatism, Table \ref{tab: in_sample_3} shows the comparison of the two SCUC problems when $\rho = 0.1$. The risk-aware SCUC provides very similar dispatch decisions as the deterministic SCUC when the variations are low. This is because at those volatility levels it is more expensive to mitigate the risk by altering the commitment decisions, and it is not economic to do so given the low weight assigned to RT consumer exposure. {\ccblue This effect of $\rho$ is similar to the effect of the penalty term weight in portfolio optimization.} 

\begin{table}[htbp]
\centering
\caption{\small \sc Comparison of Risk-Aware and Deterministic SCUC Problems for $\rho = 0.1$}
\label{tab: in_sample_3}
\begin{tabular}{rrrrrrr}
\hline
\multicolumn{1}{l}{$R^d$} & \multicolumn{1}{l}{$R^w$} & \multicolumn{1}{l}{Save}  & \multicolumn{1}{l}{Deter.}     & \multicolumn{1}{l}{Cost}      & \multicolumn{1}{l}{{\cblue DA} cost}  & \multicolumn{1}{l}{\ccblue Consr. exp.}     \\
\multicolumn{1}{l}{}   & \multicolumn{1}{l}{}   & \multicolumn{1}{l}{(k\$)} & \multicolumn{1}{l}{cost (M\$)} & \multicolumn{1}{l}{red. (\%)} & \multicolumn{1}{l}{diff. (\$)} & \multicolumn{1}{l}{{\ccblue diff.} (k\$)} \\ \hline
0.1                    & 0.2                    & 0.00                      & 5.37                           & 0.00                          & 0.00                          & {\ccblue 0.00}                          \\
0.1                    & 0.4                    & 0.00                      & 5.37                           & 0.00                          & 0.00                          & {\ccblue 0.00}                           \\
0.1                    & 0.6                    & 0.00                      & 5.37                           & 0.00                          & 0.00                          & {\ccblue 0.00}                           \\
0.1                    & 0.8                    & 0.00                      & 5.41                           & 0.00                          & 0.00                          & {\ccblue 0.00}                          \\
0.1                    & 1                      & 0.00                      & 5.41                           & 0.00                          & 0.00                          & {\ccblue 0.00}                         \\
0.2                    & 0.2                    & 114.66                    & 5.50                           & 2.09                          & 161.75                        & {\ccblue -114.83}                          \\
0.2                    & 0.4                    & 114.22                    & 5.50                           & 2.08                          & 608.60                        & {\ccblue -114.83}                          \\
0.2                    & 0.6                    & 110.11                    & 5.50                           & 2.00                          & 6050.39                       & {\ccblue -116.16}                          \\\hline
\end{tabular}
\end{table}

Finally, note that the risk-aware model becomes more difficult to solve with an increase in the values of $R^d$, $R^w$, and $\rho$. Therefore, to achieve a model that balances computational efficiency and effectiveness, a practitioner needs to make thoughtful parameter selections.

\subsection{Out-of-Sample Tests}
In the previous section, we have shown that the risk-aware model leads to savings under an adverse realization picked by $\dcopfa$. In this section, we test the robustness of the results under different realizations. In other words, we carry out ``out-of-sample" tests to evaluate the dispatch decisions, by first obtaining dispatch decisions from risk-aware and deterministic SCUC models, and then formulating corresponding {\cblue RT} DCOPF problems with simulated random realizations. This process allows us to assess the cost-saving and volatility reduction benefits of the risk-aware model.

More specifically, we randomly sample 100 realizations with two different methods, and evaluate the risk-aware model with both types of random samples. Note that for experiments in this section, the {\cblue RT} DCOPF problem considers 5-minute time intervals in $\Trt$, providing a more realistic setup that mirrors real-life operations. As we will demonstrate shortly, even though the risk-aware dispatch solution is obtained based on 1-hour time interval, it still yields benefits when the {\cblue RT} problem is solved more frequently with smaller time intervals.

Our first experiment checks whether it is beneficial to use the risk-aware dispatch decisions under different adverse realizations. In particular, we perturb the selected adverse stressors $(\forall t\in\Trt, ind \in\{d, w\})~\valpha^{ind*} = (\alpha^{ind*}_{1t}, \alpha^{ind*}_{2t}, \alpha^{ind*}_{3t})$ by randomly sample vectors with a fixed norm, where those vectors are uniformly distributed in a spherical cone centered at $\valpha^{ind*}$ with the cone angle equals $\pi/3$. This perturbation fixes the magnitude of the adverse realization, while allowing the weight distribution to vary among the 3 leading modes to a certain degree. Using those random samples, we evaluate the risk-aware solutions under two different levels of conservatism with $R^d = 0.2, R^w = 0.2$ and $R^d=0.2, R^w = 1.0$. The results are presented respectively in Table \ref{tab:sim_rot_1} and Table \ref{tab:sim_rot_2}.

For both tables, we show the comparison between risk-aware and deterministic results when there are different bounds on stressors. $R^d$ takes the values 0.1 and 0.2, and $R^w$ takes values between 0.2 and 1.4 with stepsize 0.4. Columns ``Save", ``Cost red.", and ``Consr. exp." have similar definitions as in Section \ref{ch: results_in_sample}, except that they are averaged over 100 random samples for out-of-sample evaluation. ``Deter. std" and ``RA. std" are respectively the average standard deviations of total cost under deterministic and risk-aware dispatch solutions.

\begin{table}[htbp]
\centering
\caption{\small \sc Comparison of {\ccblue Risk-Aware and Deterministic} SCUC Solutions Obtained for $R^d = 0.2, R^w = 0.2$, Evaluated with Perturbed Adverse Realizations}
\label{tab:sim_rot_1}
\begin{tabular}{rrrrrrr}
\hline
\multicolumn{1}{l}{$R^d$} & \multicolumn{1}{l}{$R^w$} & \multicolumn{1}{l}{Save}  & \multicolumn{1}{l}{Cost}      & \multicolumn{1}{l}{Consr.}     & \multicolumn{1}{l}{Deter.}    & \multicolumn{1}{l}{RA.}       \\
\multicolumn{1}{l}{}   & \multicolumn{1}{l}{}   & \multicolumn{1}{l}{(k\$)} & \multicolumn{1}{l}{red. (\%)} & \multicolumn{1}{l}{{\cblue exp.} (k\$)} & \multicolumn{1}{l}{std (k\$)} & \multicolumn{1}{l}{std (k\$)} \\ \hline
0.1                    & 0.2                    & -0.04                     & 0.00                          & 7.55                           & 0.46                          & 0.43                          \\
0.1                    & 0.6                    & 2.31                      & 0.04                          & 8.33                           & 3.49                          & 1.91                          \\
0.1                    & 1.0                      & 4.07                      & 0.08                          & 11.16                          & 4.13                          & 3.25                          \\
0.1                    & 1.4                    & 2.97                      & 0.06                          & 16.06                          & 4.30                          & 4.11                          \\
0.2                    & 0.2                    & 17.57                     & 0.32                          & 49.71                          & 20.34                         & 16.29                         \\
0.2                    & 0.6                    & 15.40                     & 0.28                          & 49.51                          & 15.71                         & 13.34                         \\
0.2                    & 1.0                      & 11.86                     & 0.22                          & 54.63                          & 13.62                         & 13.68                         \\
0.2                    & 1.4                    & 7.86                      & 0.14                          & 57.28                          & 13.60                         & 13.53                         \\ \hline
\end{tabular}
\end{table}

\begin{table}[htbp]
\centering
\caption{\small \sc Comparison of {\ccblue Risk-Aware and Deterministic} SCUC Solutions Obtained for $R^d = 0.2, R^w = 1.0$, Evaluated with Perturbed Adverse Realizations}
\label{tab:sim_rot_2}
\begin{tabular}{rrrrrrr}
\hline
\multicolumn{1}{l}{$R^d$} & \multicolumn{1}{l}{$R^w$} & \multicolumn{1}{l}{Save}  & \multicolumn{1}{l}{Cost}      & \multicolumn{1}{l}{Consr.}     & \multicolumn{1}{l}{Deter.}    & \multicolumn{1}{l}{RA.}       \\
\multicolumn{1}{l}{}   & \multicolumn{1}{l}{}   & \multicolumn{1}{l}{(k\$)} & \multicolumn{1}{l}{red. (\%)} & \multicolumn{1}{l}{{\cblue exp.} (k\$)} & \multicolumn{1}{l}{std (k\$)} & \multicolumn{1}{l}{std (k\$)} \\ \hline
0.1                    & 0.2                    & -0.88                     & -0.02                         & 7.43                           & 0.45                          & 0.43                          \\
0.1                    & 0.6                    & 0.70                      & 0.01                          & 7.44                           & 2.59                          & 0.44                          \\
0.1                    & 1.0                      & 4.47                      & 0.08                          & 7.42                           & 3.63                          & 0.43                          \\
0.1                    & 1.4                    & 9.76                      & 0.18                          & 8.26                           & 4.48                          & 1.72                          \\
0.2                    & 0.2                    & 51.33                     & 0.94                          & 15.27                          & 19.48                         & 0.99                          \\
0.2                    & 0.6                    & 43.48                     & 0.80                          & 17.87                          & 15.46                         & 4.72                          \\
0.2                    & 1.0                      & 37.17                     & 0.68                          & 24.81                          & 13.51                         & 8.43                          \\
0.2                    & 1.4                    & 31.31                     & 0.58                          & 31.73                          & 13.85                         & 10.43                         \\ \hline
\end{tabular}
\end{table}

\begin{table}[htbp]
\centering
\caption{\small \ccblue \sc Comparison of Risk-Aware and Stochastic SCUC Solutions Obtained for $R^d = 0.2, R^w = 1.0$, Evaluated with Perturbed Adverse Realizations}
\label{tab:sim_sto}
{\ccblue
\begin{tabular}{rrrrrr}
\hline
\multicolumn{1}{l}{$R^d$} & \multicolumn{1}{l}{$R^w$} & \multicolumn{1}{l}{Save (k\$)}  & \multicolumn{1}{l}{Cost red. (\%)} & \multicolumn{1}{l}{Sto. std (k\$)} \\ \hline
0.1 & 0.2 & 31.91 & 0.59 & 0.64 \\
0.1 & 0.6 & 32.16 & 0.60 & 0.97 \\
0.1 & 1.0 & 34.93 & 0.65 & 3.19 \\
0.1 & 1.4 & 38.56 & 0.71 & 3.74 \\
0.2 & 0.2 & 66.90 & 1.23 & 14.71 \\
0.2 & 0.6 & 62.90 & 1.15 & 12.61 \\
0.2 & 1.0 & 58.45 & 1.07 & 12.19 \\
0.2 & 1.4 & 55.80 & 1.02 & 12.55 \\ \hline
\end{tabular}}
\end{table}

In both Table \ref{tab:sim_rot_1}
and Table \ref{tab:sim_rot_2}, the risk-aware decisions save costs in almost all instances, despite the perturbation on realizations. Generally, the cost saving is larger under higher variations. Yet when there are extremely high variations the cost saving starts to decrease, as the extra capacity dispatched under the risk-aware solutions is not enough to keep LMPs below $\cvoll$ at many buses. We also note that the risk-aware dispatch decisions lead to lower volatility in total cost, as shown by the comparison of the standard deviations.

Compared with Table \ref{tab:sim_rot_1}, Table \ref{tab:sim_rot_2} shows risk-aware solutions lead to higher savings and lower standard deviation in costs at high variation levels. Thus, to achieve more savings, it is important to adjust the values of $R^d$ and $R^w$ accordingly when formulating the risk-aware SCUC problem. For example, if for a certain day the {\cblue RT} wind generation is expected to be very volatile, the practitioner should solve the risk-aware SCUC problem with a higher $R^w$ value.

{\ccblue Table \ref{tab:sim_sto} presents a comparison between risk-aware and stochastic models, which shows that the risk-aware model saves costs in all instances. In general, the stochastic model leads to higher cost volatility compared with the risk-aware model, but lower cost volatility than the deterministic model.}


{\cblue Figure \ref{fig: rotate_prices_surplus_1} shows the marginal prices in DA and RT markets, where each dot represents the price at a bus averaged over all samples and time periods. Similar to our observations with Figure \ref{fig: fix_price}, the risk-aware problem leads to lower and more stable RT prices compared with the {\ccblue other models. Prices of the stochastic model have a lower median compared with those of the deterministic model; however, prices at some buses are much higher than the median for the stochastic model. This is probably because some stochastic scenarios can be overly optimistic at certain buses, leading to commitment decisions that incur high RT costs. Also, since the stochastic SCUC only includes 10 scenarios (as discussed in Section \ref{ch: sto_scuc} of the Appendix), overly optimistic scenarios could be over-represented, exacerbating the issue.} 

Figures \ref{fig: rotate_prices_surplus_2} and \ref{fig: rotate_prices_surplus_3} respectively show the RT consumer exposure (as defined in \eqref{eq:excess}) and producer surplus, with each dot representing the average value of a sample. Again, those values are much lower with the risk-aware problem. {\ccblue Compared with the deterministic SCUC, the stochastic SCUC leads to a slightly lower median for the RT consumer exposure, possibly because the stochastic SCUC commits more DA capacities. On the other hand, likely due to high prices at certain buses, the RT producer surplus median for the stochastic model is the highest among all models.}}

\begin{figure}[htbp]
    \centering
    \subfloat[]{{\includegraphics[width=0.25\textwidth]{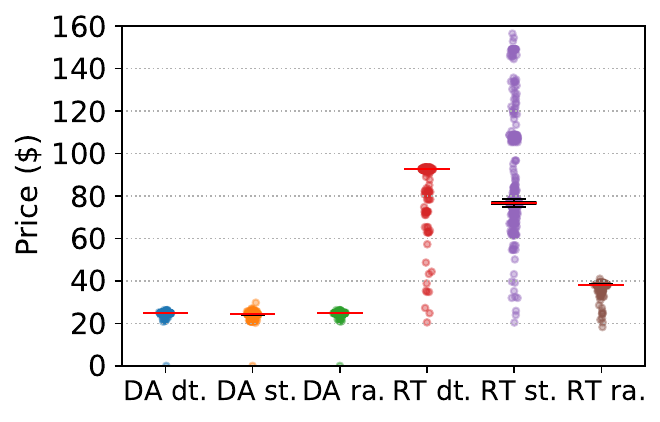}}\label{fig: rotate_prices_surplus_1}}
    \qquad
    \subfloat[]{{\includegraphics[width=0.12\textwidth]{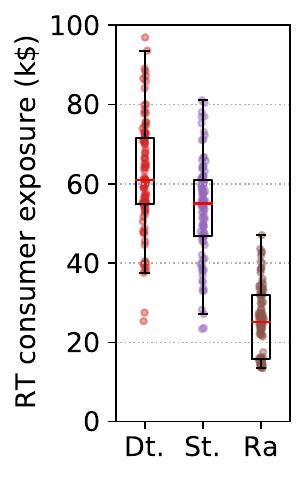}}\label{fig: rotate_prices_surplus_2}}
    \subfloat[]{{\includegraphics[width=0.12\textwidth]{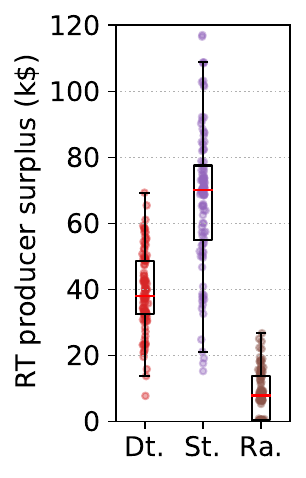}}\label{fig: rotate_prices_surplus_3}}
    \caption{\cblue Evaluated with perturbed adverse realizations for $R^d = 0.2, R^w = 1.0$: (a) DA and RT prices. (b) RT consumer exposure. (c) RT producer surplus. For all figures: The red line represents the median. The height of the boxes in (b) and (c) represents the interquartile range.}%
    \label{fig: rotate_prices_surplus}%
\end{figure}

Our second experiment evaluates the benefit of risk-aware dispatch decisions under less adverse realizations. We generate samples of $\alpha^{ind}_{kt} (\forall k=1,2,3;t\in\Trt, ind\in\{d, w\})$ that follow a uniform distribution in $[-R^{ind}, R^{ind}]$, and calculate corresponding realizations. Note that we truncate the realizations with a 0 lower bound to avoid negative load and wind generation. Again, we evaluate the dispatch solutions obtained with $R^d=0.2,R^w=0.2$ and $R^d=0.2, R^2=1.0$. The results {\ccblue for the comparison between deterministic and risk-aware models} are presented respectively in Table \ref{tab:sim_uniform_1} and Table \ref{tab:sim_uniform_2}. 

\begin{table}[htbp]
\centering
\caption{\small \sc Comparison of {\ccblue Risk-Aware and Deterministic} SCUC Solutions Obtained for $R^d = 0.2, R^w = 0.2$, Evaluated with Uniformly-Distributed Stressors}
\label{tab:sim_uniform_1}
\begin{tabular}{rrrrrrr}
\hline
\multicolumn{1}{l}{$R^d$} & \multicolumn{1}{l}{$R^w$} & \multicolumn{1}{l}{Save}  & \multicolumn{1}{l}{Cost}      & \multicolumn{1}{l}{Consr.}     & \multicolumn{1}{l}{Deter.}    & \multicolumn{1}{l}{RA.}       \\
\multicolumn{1}{l}{}   & \multicolumn{1}{l}{}   & \multicolumn{1}{l}{(k\$)} & \multicolumn{1}{l}{red. (\%)} & \multicolumn{1}{l}{{\cblue exp.} (k\$)} & \multicolumn{1}{l}{std (k\$)} & \multicolumn{1}{l}{std (k\$)} \\ \hline
0.1                    & 0.2                    & -0.09                     & 0.00                          & 2.69                           & 0.62                          & 0.62                          \\
0.1                    & 0.6                    & -0.09                     & 0.00                          & 2.61                           & 0.52                          & 0.52                          \\
0.1                    & 1.0                    & 0.06                      & 0.00                          & 2.60                           & 0.90                          & 0.59                          \\
0.1                    & 1.4                    & 0.33                      & 0.01                          & 2.70                           & 1.38                          & 0.78                          \\
0.2                    & 0.2                    & 0.61                      & 0.01                          & 5.59                           & 2.55                          & 1.22                          \\
0.2                    & 0.6                    & 1.65                      & 0.03                          & 6.19                           & 4.23                          & 2.23                          \\
0.2                    & 1.0                    & 2.68                      & 0.05                          & 7.58                           & 6.28                          & 3.95                          \\
0.2                    & 1.4                    & 2.30                      & 0.04                          & 8.23                           & 6.50                          & 4.95                          \\
0.3                    & 0.2                    & 6.11                      & 0.11                          & 19.62                          & 15.17                         & 12.89                         \\
0.3                    & 0.6                    & 6.50                      & 0.12                          & 20.53                          & 12.85                         & 11.71                         \\
0.3                    & 1.0                    & 6.13                      & 0.11                          & 21.87                          & 14.51                         & 11.34                         \\
0.3                    & 1.4                    & 5.45                      & 0.10                          & 22.45                          & 12.09                         & 10.68                         \\ \hline
\end{tabular}
\end{table}

\begin{table}[htbp]
\centering
\caption{\small \sc Comparison of {\ccblue Risk-Aware and Deterministic} SCUC Solutions Obtained for $R^d = 0.2, R^w = 1.0$, Evaluated with Uniformly-Distributed Stressors}
\label{tab:sim_uniform_2}
\begin{tabular}{rrrrrrr}
\hline
\multicolumn{1}{l}{$R^d$} & \multicolumn{1}{l}{$R^w$} & \multicolumn{1}{l}{Save}  & \multicolumn{1}{l}{Cost}      & \multicolumn{1}{l}{Consr.}     & \multicolumn{1}{l}{Deter.}    & \multicolumn{1}{l}{RA.}       \\
\multicolumn{1}{l}{}   & \multicolumn{1}{l}{}   & \multicolumn{1}{l}{(k\$)} & \multicolumn{1}{l}{red. (\%)} & \multicolumn{1}{l}{{\cblue exp.} (k\$)} & \multicolumn{1}{l}{std (k\$)} & \multicolumn{1}{l}{std (k\$)} \\ \hline
0.1                    & 0.2                    & -0.88                     & -0.02                         & 2.52                           & 0.62                          & 0.59                          \\
0.1                    & 0.6                    & -0.88                     & -0.02                         & 2.44                           & 0.52                          & 0.50                          \\
0.1                    & 1.0                    & -0.74                     & -0.01                         & 2.45                           & 0.90                          & 0.57                          \\
0.1                    & 1.4                    & -0.36                     & -0.01                         & 2.44                           & 1.38                          & 0.58                          \\
0.2                    & 0.2                    & 0.08                      & 0.00                          & 5.15                           & 2.55                          & 1.19                          \\
0.2                    & 0.6                    & 1.65                      & 0.03                          & 5.23                           & 4.23                          & 1.19                          \\
0.2                    & 1.0                    & 4.03                      & 0.07                          & 5.28                           & 6.28                          & 1.51                          \\
0.2                    & 1.4                    & 4.34                      & 0.08                          & 5.23                           & 6.50                          & 1.60                          \\
0.3                    & 0.2                    & 16.66                     & 0.31                          & 8.11                           & 15.17                         & 2.05                          \\
0.3                    & 0.6                    & 17.60                     & 0.33                          & 8.48                           & 12.85                         & 2.21                          \\
0.3                    & 1.0                    & 16.69                     & 0.31                          & 10.35                          & 14.51                         & 4.76                          \\
0.3                    & 1.4                    & 15.79                     & 0.29                          & 11.16                          & 12.09                         & 6.14                          \\ \hline
\end{tabular}
\end{table}

\begin{table}[htbp]
\centering
\caption{\small \ccblue \sc Comparison of Risk-Aware and Stochastic SCUC Solutions Obtained for $R^d = 0.2, R^w = 1.0$, Evaluated with Uniformly-Distributed Stressors}
\label{tab:sim_uniform_sto}
{\ccblue
\begin{tabular}{rrrrrr}
\hline
\multicolumn{1}{l}{$R^d$} & \multicolumn{1}{l}{$R^w$} & \multicolumn{1}{l}{Save (k\$)}  & \multicolumn{1}{l}{Cost red. (\%)} & \multicolumn{1}{l}{Sto. std (k\$)} \\ \hline
0.1 & 0.2 & 30.73 & 0.57 & 0.78 \\
0.1 & 0.6 & 30.67 & 0.57 & 0.62 \\
0.1 & 1.0 & 30.70 & 0.57 & 0.67 \\
0.1 & 1.4 & 30.85 & 0.57 & 1.08 \\
0.2 & 0.2 & 31.58 & 0.58 & 1.61 \\
0.2 & 0.6 & 32.44 & 0.60 & 3.21 \\
0.2 & 1.0 & 33.75 & 0.62 & 3.80 \\
0.2 & 1.4 & 34.23 & 0.63 & 5.09 \\
0.3 & 0.2 & 43.20 & 0.80 & 11.62 \\
0.3 & 0.6 & 43.79 & 0.81 & 11.67 \\
0.3 & 1.0 & 43.51 & 0.80 & 11.32 \\
0.3 & 1.4 & 43.13 & 0.80 & 13.21 \\ \hline
\end{tabular}}
\end{table}

Even with less adverse realizations, the risk-aware dispatch solutions still save costs at higher variation levels. The standard deviation of total costs is also smaller under risk-aware solutions, indicating a more reliable dispatch schedule. Due to the dispatch of additional capacity, the risk-aware solution is more costly at very low variation levels, yet such cost is relatively small compared with potential savings when there are higher variations.

{\ccblue Table \ref{tab:sim_uniform_sto} presents the comparison between risk-aware and stochastic models under less adverse realization. The stochastic model again leads to higher total costs and higher standard deviations. }

{\cblue For less adverse realizations, the results for marginal prices, RT consumer exposure, and RT producer surplus are similar to Figure \ref{fig: rotate_prices_surplus}, and thus we omit them here.}


{\cblue Note that in practice the {\cblue RT} market is cleared in a rolling horizon manner, where at each {\cblue RT} time period generation output is decided based on the current net load condition. Our out-of-sample test serves as a bound case for this rolling horizon market clearing procedure.}

\section{Conclusion}\label{sec: conclusion}
In this work, we enhance the SCUC computation to better handle load and wind generation volatility, which reduces the {\cblue RT consumer exposure} due to {\cblue RT} price spike. Our method features a data-driven PCA-based uncertainty set, which models the locational correlation in uncertain data. We develop cutting planes and heuristics to solve the non-convex optimization model efficiently. Validated through the extensive case study on an NYISO data set, our approach effectively reduces total costs and cost volatility under adverse scenarios. Notably, these benefits are observed across various levels of variation, and are achieved without substantial expenses for dispatch. 
\appendix
\subsection{Example: Binding Ramping Constraint and LMP}\label{ch: ramp_example}
We provide an example to demonstrate that when $\vy$ changes to $\vy'$ and when a ramping constraint is binding, it is possible that the LMP at hour $\hat{t}$ decreases even if $\mc{I}_{0\hat{t}} \subseteq \mc{I}'_{0\hat{t}}$, i.e., all generators were off under $\vy$ remain off when switching to $\vy'$. 

Consider a system with uncongested network, three thermal generators $g\in\{1,2,3\}$ and two hours $t\in\{1,2\}$. Let the {\cblue RT} load be 59 for both hours and $\pmin$ equals 0 for all generators. Also, let the generator capacity $\mathbf{P}^\max = [50, 10, 50]$ and {\cblue RT} ramping limit $\mathbf{M}^{\rm{RT}}=[\infty,\infty, 1]$. Assume that all generators have no startup or shutdown cost and that they have constant cost for each MWh of power output, which equals 1, 2, and 3 respectively for the three generators. Initially, let all generators be committed in both hours, i.e., $\mc{I}_{0t} = \emptyset, \forall t\in\{1,2\}$. Then the vector of optimal production decisions ($p_{gt}$) is $\mathbf{p} = [50, 9, 0; 50, 9, 0]$ and the LMP $\pmb \lambda = [2, 2]$.

Now we switch from $\vy$ to $\vy'$ by turning off $g=1$ at hour $t=1$. Then the updated production decisions $\mathbf{p}'=[0,10,49;11,0,48]$. Note that $p'_{32} = 48$ because the ramping limit of generator 3 is 1. Since generator 1 has no ramping limit and $p'_{12}\in (0,50)$, it is the marginal generator and sets the LMP in hour 2 and thus $\lambda'_2 = 1$. Therefore, the LMP at $t=2$ decreases even if $\mc{I}_{02} \subseteq \mc{I}'_{02} = \emptyset$.
{\ccblue
\subsection{Two-Stage Stochastic Programming SCUC Model}\label{ch: sto_scuc}
The two-stage stochastic programming SCUC model \citep{zhang2018conditional, asensio2015stochastic} used for benchmarking in numerical experiments of Section \ref{sec: numerical} minimizes the sum of the expected total cost and the weighted term for CVaR of the total cost:
\begin{subequations}\label{sto_bench}
\begin{alignat}{4}
\min~~& \frac{1}{|\mc{S}|} \sum_{s\in\mc{S}} c_s + \rho \left(\var + \frac{1}{1-\beta} \frac{1}{|\mc{S}|}\sum_{s\in\mc{S}} \eta_s\right)\hspace{-12cm}\label{sto_bench_0}\\
\st~~& \eta_s\geq c_s - \var,~~\forall s\in\mc{S}\label{sto_bench_1}\\
& c_s = \sum_{t\in\Tda}\Big(\sum_{g\in\Gt} \left(h_{sgt} + \cstart v_{gt} + \cdown w_{gt}\right)\nonumber\hspace{-12cm}\\& + \sum_{i\in\N} \cvollrt \punmets\Big),~~\forall s\in\mc{S}\label{sto_bench_2}\\
& h_{sgt} \geq C^1_{og} p_{sgt} + C^0_{og}y_{gt} ,~~\forall s\in\mc{S}, o\in\mc{O}, g\in\Gt \label{sto_bench_3}\\
&\sum_{g\in\mc{G}_i} p_{sgt} + p^{Unmet}_{sit}+ \sum_{(j,i)\in\mc{L}}f_{sjit} \hspace{-12cm}\\& - \sum_{(i,j)\in\mc{L}} f_{sijt} = D_{sit} ,~~\forall s\in\mc{S}, i\in\mc{N}, t\in\mc{T}\\
& f_{sijt} = B_{ij} \left(\theta_{sit} - \theta_{sjt}\right),~~\forall s\in\mc{S}, (i, j)\in\mc{L}, t\in\mc{T}\\
& f_{sijt} \leq P_{ij}^{Trans} ,~~\forall s\in\mc{S}, (i, j)\in\mc{L}, t\in\mc{T} \\
& f_{sijt}\geq - P_{ij}^{Trans},~~ \forall s\in\mc{S}, (i, j)\in\mc{L}, t\in\mc{T}\\
&p_{sgt} \leq P^\max_g y_{gt} ,~~\forall s\in\mc{S}, g\in\mc{G}, t\in\mc{T} \\
& p_{sgt} \geq P^\min_g y_{gt} ,~~ \forall  s\in\mc{S}, g\in\mc{G}, t\in\mc{T}\\
& p_{sgt} - p_{sg, t-1} \leq M_g y_{g, t-1} + P^\min_g v_{gt},~~\nonumber\\ &\hspace{2.3cm}\forall s\in\mc{S}, g\in\Gt, t\in\mc{T}\sm\{1\}\\
& p_{sg,t-1} - p_{sgt} \leq M_g y_{gt} + P^\min_g w_{gt},~~\nonumber\\ &\hspace{2.25cm}\forall s\in\mc{S}, g\in\Gt, t\in\mc{T}\sm\{1\}\\
& p_{sgt} + \pcurtails = \pmaxs  ,~~ \forall s\in\mc{S}, g\in\Gw, t\in\mc{T}\\
& u_{gt}-w_{gt} = y_{gt} - y_{g, t-1}  ,~~\forall g\in\mc{G}, t\in\mc{T}\sm\{1\}\label{sto_bench_13}\\
& u_{gt}, w_{gt}, y_{gt} \in \{0,1\} ,~~ \forall g\in\mc{G}, t\in\mc{T}\\
& p^{Unmet}_{sit}, \eta_s \geq 0 ,~~\forall s\in\mc{S}, i\in\mc{N}, t\in\mc{T}\label{sto_bench_15}\\
& \pcurtails \geq 0,~~ \forall s\in\mc{S}, g\in\Gw, t\in\mc{T}.
\end{alignat}
\end{subequations}

The first term of the objective function \eqref{sto_bench_0} is the expected total cost, where $\mc{S}$ is the set of scenarios (we assume each scenario is equally likely) and $c_s$ is the total cost of scenario $s\in\mc{S}$. The second term of \eqref{sto_bench_0} is the CVaR of total cost multiplied by a weight $\rho$. Inside the CVaR term, $\beta$ is the confidence level and $\eta_s$ is an auxiliary variable which equals to $(c_s - \var)^+$, with $\var\in\mb{R}$ being a free variable. \eqref{sto_bench_1} and $\eta_s\geq 0$ enforce $\eta_s = (c_s - \var)^+$. \eqref{sto_bench_3} - \eqref{sto_bench_13} are SCUC constraints. 

In our experiment, we set $\rho = 0$ and $\beta = 0.9$. The RT demand $D_{sit}$ and wind production $\pmaxs$ are randomly sampled from the uncertainty sets $\Omega$, with the weights $\alpha^{ind}_{kt} (\forall k=1,2,3;t\in\Trt, ind\in\{d, w\})$ following a uniform distribution in $[-R^{ind}, R^{ind}]$. We use 10 samples in the experiment. 

As shown in Table \ref{tab: in_sample_sto}, the stochastic SCUC is difficult to solve even with only 10 sampled scenarios. In addition to the warm-start solution method as described in Section \ref{sec: numerical}, we also experimented with the L-shaped method \cite{van1969shaped}, which is a classical algorithm for two-stage stochastic programs, via a branch-and-cut scheme. The L-shaped method struggles to find good feasible solutions, and its performance is occasionally worse than direct solving for all instances.
}






\section*{Acknowledgment}
{\ccblue The authors would like to thank Clemson University for the allotment of compute time on the Palmetto Cluster.}

\ifCLASSOPTIONcaptionsoff
  \newpage
\fi



%
{\footnotesize
\bibliographystyle{IEEEtran}
\bibliography{references}}



%








\end{document}